\documentclass[12pt]{amsart}
\usepackage[T1]{fontenc}
\usepackage{lmodern}
\usepackage{amsmath,amssymb,amsthm,mathtools}
\usepackage{microtype}
\usepackage[hidelinks]{hyperref}
\usepackage[margin=1.15in]{geometry}
\usepackage{enumitem}

\newtheorem{theorem}{Theorem}[section]
\newtheorem{proposition}[theorem]{Proposition}
\newtheorem{lemma}[theorem]{Lemma}
\newtheorem{corollary}[theorem]{Corollary}
\theoremstyle{definition}
\newtheorem{definition}[theorem]{Definition}
\newtheorem{example}[theorem]{Example}
\theoremstyle{remark}
\newtheorem{remark}[theorem]{Remark}

\newcommand{\Map}{\operatorname{Map}}
\newcommand{\Sym}{\operatorname{Sym}}
\newcommand{\mdim}{\operatorname{mdim}}

\newcommand{\diam}{\operatorname{diam}}
\newcommand{\cH}{\mathsf H}
\newcommand{\N}{\mathbb N}

\title[Sofic $p$-metric mean dimension]{Equivalence of Sofic $p$-Metric Mean Dimensions and a Tame-Metric Variational Formula}
\author{Xianqiang Li and Zhuowei Liu}
\date{\today}
\subjclass[2020]{37B02, 54E45}
\address[X. Li]{Shanghai center for Mathematial Science, Fudan University, Shanghai,  200000, P.R. China}
\email{26110840010@m.fudan.edu.cn}
\address[Z. Liu]{School of Mathematics (Zhuhai), Sun Yat-sen University,
	Zhuhai, Guangdong, 519000, P.R. China}
\email{liuzhw55@mail2.sysu.edu.cn}

\keywords{Sofic group; metric mean dimension}
\thanks{Zhuowei Liu is the corresponding author.}

\begin{document}

\begin{abstract}
Let $\Gamma$ be a countable discrete sofic group acting by homeomorphisms on a compact metrizable
space $X$ and $\Sigma$ a sofic approximation of $\Gamma$. We prove that for every $1\leq p<\infty$, the sofic $p$-metric mean dimension is equivalent to
the sofic metric mean dimension, i.e there exists a common value
$ D_\Sigma(X,\Gamma)\in\{-\infty\}\cup[0,+\infty]$
such that,
$$D_\Sigma(X,\Gamma)=\mdim_{\Sigma,\mathrm M,p}(X,\Gamma)
 =\mdim_{\Sigma,\mathrm M,\infty}(X,\Gamma),$$
 which answers a question of Hayes in \cite[Question 3]{Hayes}.
 
Moreover, a tame-metric variational formula is established. That is  for every $1\leq q\leq\infty$,
$$
 D_\Sigma(X,\Gamma)
 =\inf_{\rho\in\mathcal T(X)}
 \underline{\mdim}_{\Sigma,q}(X,\rho),
$$
where $\mathcal T(X)$ is the set of all compatible metrics on $X$ having tame growth of covering numbers.
\end{abstract}

\maketitle

\section{Introduction}

Mean dimension, introduced by Gromov~\cite{MG} and systematically developed by Lindenstrauss and Weiss~\cite{LW}, is a topological invariant that quantifies the “number of parameters per unit action” in infinite-dimensional dynamical systems. Building on this, Lindenstrauss and Weiss introduced a metric analogue, called metric mean dimension. It is defined by taking the infimum, over all metrics on the space $X$, of the dynamical packing numbers associated with the system.

For action of countable discrete sofic groups, Li~\cite{Li} extended metric mean dimension via sofic approximations, leading to the notion of sofic metric mean dimension $\mdim_{\Sigma,\mathrm M,\infty}(X,\Gamma)$, which depends on a fixed sofic approximation $\Sigma$.
Subsequently, Hayes \cite{Hayes} introduced $\ell^p$ analogues of sofic metric mean dimension and asked whether, for every $1\leq p<\infty$,
\[
 \mdim_{\Sigma,\mathrm M,p}(X,\Gamma)
 =\mdim_{\Sigma,\mathrm M,\infty}(X,\Gamma).
\]
See \cite[Question 3]{Hayes}. The purpose of this paper is to prove the above equality.

There is an important fixed-metric precursor. Hayes proved that if a dynamically generating continuous pseudometric $\rho$ has finite upper covering exponent,
\[
 \limsup_{\varepsilon\downarrow0}
 \frac{\log S_\varepsilon(X,\rho)}{\log(1/\varepsilon)}<+\infty,
\]
then the fixed-metric $\ell^1$ and $\ell^\infty$ quantities coincide \cite[Proposition 2.20]{Hayes}. Hayes called this a finite packing-dimension hypothesis; in standard metric-dimension terminology the displayed condition is an upper box/Minkowski-type covering condition. 

In this paper, we extend that mechanism from finite packing dimension to the larger class of metrics with tame growth of covering numbers (which introduced by Lindenstrauss and Tsukamoto \cite{LT}). Firstly, we show that for a  compatible metric $\rho$ on $X$ with tame growth of covering numbers and every $1\leq p<\infty$, the sofic 
$p$-metric mean dimensions with respect to a fixed-metric is equal to sofic metric mean dimension with respect to a fixed-metric (see Proposition~\ref{prop:tame-equality}).

The decisive new step for \cite[Question 3]{Hayes} is then to combine this extension with a tame metric minorant and a monotonicity theorem under decreasing the phase-space metric. Let $\mathcal T(X)$ denote the set of all compatible metrics on $X$ having tame growth of covering numbers (see Definition \ref{def tame gowth}). Precisely, we show the following tame-metric variational formula.
\begin{theorem}[Tame-metric variational formula]\label{thm:strengthened}
Let $\Gamma$ be a countable discrete sofic group acting by homeomorphisms on a compact metrizable
space $X$ and $\Sigma$ a sofic approximation of $\Gamma$. There exists a common value
\[
 D_\Sigma(X,\Gamma)\in\{-\infty\}\cup[0,+\infty]
\]
such that, for every $1\leq p<\infty$,
\[
 D_\Sigma(X,\Gamma)
 =\mdim_{\Sigma,\mathrm M,p}(X,\Gamma)
 =\mdim_{\Sigma,\mathrm M,\infty}(X,\Gamma).
 \tag{1.1}
\]
Moreover, for every $1\leq q\leq\infty$,
\[ 
 D_\Sigma(X,\Gamma)
 =\inf_{\rho\in\mathcal T(X)}
 \underline{\mdim}_{\Sigma,q}(X,\rho).
 \tag{1.2}
\]
Thus the same tame class of compatible metrics computes all the $\ell^q$ versions simultaneously.
\end{theorem}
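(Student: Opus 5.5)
The plan is to squeeze all the quantities between two inequalities. Write $\underline{\mdim}_{\Sigma,q}(X,\rho)$ for the fixed-metric lower quantity, so that $\mdim_{\Sigma,\mathrm M,q}(X,\Gamma)=\inf_\rho \underline{\mdim}_{\Sigma,q}(X,\rho)$, with the infimum over all compatible metrics. The first step is the pointwise comparison
\[
\underline{\mdim}_{\Sigma,p}(X,\rho)\le \underline{\mdim}_{\Sigma,\infty}(X,\rho)
\]
for every compatible $\rho$ and $1\le p<\infty$. This holds because $\rho_p\le\rho_\infty$ on $X^{d_i}$, so $\varepsilon$-separated sets for $\rho_p$ are $\varepsilon$-separated for $\rho_\infty$. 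Taking infima gives $\mdim_{\Sigma,\mathrm M,p}\le\mdim_{\Sigma,\mathrm M,\infty}$. The case of empty microstate spaces, where everything equals $-\infty$, is handled separately and trivially.

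Second, for $\rho\in\mathcal T(X)$ I invoke Proposition~\ref{prop:tame-equality}, which gives $\underline{\mdim}_{\Sigma,p}(X,\rho)=\underline{\mdim}_{\Sigma,\infty}(X,\rho)$ for all $p$. Hence $\inf_{\rho\in\mathcal T(X)}\underline{\mdim}_{\Sigma,q}(X,\rho)$ is a single number independent of $q$; call it $D_\Sigma(X,\Gamma)$. Trivially $\mdim_{\Sigma,\mathrm M,q}\le D_\Sigma$ for every $q$.

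The remaining and main step is the reverse inequality $D_\Sigma\le \mdim_{\Sigma,\mathrm M,p}(X,\Gamma)$ for each finite $p$. Given an arbitrary compatible metric $d$ with $\underline{\mdim}_{\Sigma,p}(X,d)$ close to the infimum, I want a tame metric $\rho\le d$ that is still compatible; this is the \emph{tame metric minorant}. I would construct it as follows. Embed $X$ via $x\mapsto (f_n(x))_n$, with suitably chosen $1$-Lipschitz (for $d$) functions $f_n$ into $[0,1]$, and set $\rho(x,y)=\sum_n 2^{-n}|f_n(x)-f_n(y)|$, or a variant with faster decaying weights. This makes $\log S_\varepsilon(X,\rho)$ grow at most like a controlled function of $\log(1/\varepsilon)$, giving tame growth, while $\rho\le d$ up to a constant. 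One then needs the monotonicity theorem: if $\rho\le C d$, then $\underline{\mdim}_{\Sigma,p}(X,\rho)\le\underline{\mdim}_{\Sigma,p}(X,d)$. On the single-metric level this is clear, since separated sets shrink. The subtlety is that the microstate spaces $\Map(\rho,F,\delta,\sigma)$ are defined using the metric itself, so I must show that $\Map(\rho,\dots)\subseteq\Map(d,F,\delta',\sigma)$ for suitable $\delta'$. Uniform equivalence of compatible metrics on compact $X$ gives this: $d(x,y)<\eta$ whenever $\rho(x,y)<\delta$, so an $\ell^2$-almost-equivariance estimate transfers after discarding a small-density set of coordinates. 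I would then need to check that the coordinate-discarding step does not inflate the $\ell^p$ separated counts beyond $e^{o(d_i)}$ at scale $\varepsilon$; the first thing to try is a standard $\binom{d_i}{\kappa d_i}$ counting argument.

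Chaining these gives $D_\Sigma\le\underline{\mdim}_{\Sigma,p}(X,\rho)\le\underline{\mdim}_{\Sigma,p}(X,d)$, and taking the infimum over $d$ yields $D_\Sigma\le\mdim_{\Sigma,\mathrm M,p}$. The same argument with $p=\infty$ finishes (1.1) and (1.2). I expect the hardest part to be the monotonicity/microstate transfer, specifically keeping the $\ell^p$ estimate uniform as $\varepsilon\to0$. Ensuring the minorant is genuinely tame rather than merely finite dimensional is secondary.
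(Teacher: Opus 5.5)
Your proposal is correct and is essentially the paper's proof: a tame minorant $\rho\le d$ built from the Hilbert-cube embedding by distance functions to a dense sequence (Lemma~\ref{lem:tame-minorant}), monotonicity under $\rho\le d$ (Proposition~\ref{prop:monotone}), and Proposition~\ref{prop:tame-equality}, organized as a squeeze with $D_\Sigma$ defined as the tame infimum, and with the microstate inclusion $\Map(\rho,F,\delta'',\sigma_i)\subseteq\Map(d,F,\delta,\sigma_i)$ proved directly for compatible metrics (uniform equivalence plus Chebyshev) rather than by citing Lemma~\ref{lem:hayes-input}. The step you flag as hardest is actually trivial: the discarded coordinates enter only the proof of that inclusion, after which $\mathrm{S}_{2\varepsilon}(A_i,\rho_p)\le\mathrm{S}_\varepsilon(B_i,d_p)$ follows at once for each fixed $\varepsilon$ (with $A_i\subseteq B_i$ the two model spaces), so no binomial counting and no uniformity in $\varepsilon$ is needed.
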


Moreover, we show that the the infimum in Theorem~\ref{thm:strengthened} can take over all Hilbert-cube metric, which is the Hilbert-cube metrics variational formula (see Theorem~\ref{thm:hilbert-formula}).

This paper is organized as follows. 
In Section~\ref{sec:prelim}, we recall the definitions and give some useful lemmas. In Section~\ref{sec:l1linfty}, for $1\leq p < \infty$, given  a compatible metric which has tame growth of covering numbers, we prove the equivalence between the lower (upper) sofic $p$-metric mean dimension and the lower (upper) sofic metric mean dimension with respect to this metric. In Section~\ref{sec:main}, we prove the Theorem~\ref{thm:strengthened} and answer the question of Hayes. In Section~\ref{sec:hilbert}, we confine the variational formula to an explicit family of Hilbert-cube metrics and derive an improved variational formula for this case.

\emph{Acknowledgments.} We would like to thank Professor Siming Tu, who provided much guidance and assistance.

 \section{Definitions and notation}\label{sec:prelim}
\begin{definition}
For a pseudometric space $(X,\rho)$ and $\varepsilon>0$, a subset $E\subseteq X$ is called $(\rho,\varepsilon)$-spanning if for every $y\in X$ there exists $x\in E$ with $\rho(x,y)<\varepsilon$. We write
$\mathrm{S}_\varepsilon(X,\rho)$
for the least cardinality of an $(\rho,\varepsilon)$-spanning subset of $X$. We use the convention $\mathrm{S}_\varepsilon(\varnothing,\rho)=0$ and $\log 0=-\infty$.
\end{definition}

\begin{definition}
For $d \in \N$, we write $[d]$ for the set $\{1,\cdots,d\}$ and  \(\Sym(d)\) for the permutation group of $[d]$. $|\cdot|$ denotes the cardinality of a set. A countable group $\Gamma$ is called \emph{sofic} if there is a \emph{sofic approximation sequence} $\Sigma=\{\sigma_{i}:\Gamma\rightarrow \text{Sym}(d_{i})\}_{i=1}^{\infty}$ for $\Gamma$, namely the following three conditions are satisfied:
		\begin{itemize}
			\item [(1)]for any $s,t \in \Gamma$, one has 
			$$\lim_{i \rightarrow \infty}\frac{|\{v \in [d_{i}]:\sigma_{i}(s)\sigma_{i}(t)(v)=\sigma_{i}(st)(v)\}|}{d_{i}}=1;$$
			\item [(2)]for any distinct $s,t \in \Gamma$, one has $$\lim_{i \rightarrow \infty}\frac{|\{v \in [d_{i}]:\sigma_{i}(s)(v)=\sigma_{i}(t)(v)\}|}{d_{i}}=0;$$
			\item [(3)] $\lim_{i \rightarrow \infty}d_{i}=+\infty.$
		\end{itemize}    
\end{definition}

For a map $\sigma$ from $G$ to $\Sym(d)$ for some $d \in \N$, we write $\sigma(s)(a)$ as $\sigma_{s}(a)$, when there is no confusion.

Throughout this paper, $\Gamma$ is a countable discrete sofic group and $X$ is a compact metrizable space.
\begin{definition}
Let $\rho$ be a continuous pseudometric on $X$. For maps $\varphi,\psi:[d]\to X$ and $1\leq p<\infty$, set
\[
 \rho_p(\varphi,\psi)
 =\left(\frac1d\sum_{v=1}^d \rho(\varphi(v),\psi(v))^p\right)^{1/p},
\]
and
\[
 \rho_\infty(\varphi,\psi)=\max_{1\leq v\leq d}\rho(\varphi(v),\psi(v)).
\]
For a nonempty finite set $F\subseteq\Gamma$, $\delta>0$, and a map $\sigma:\Gamma\to\Sym(d)$, define
\[
 \Map(\rho,F,\delta,\sigma)
 =\left\{\varphi:[d]\to X:
 \rho_2(\varphi\circ\sigma_s,s\varphi)<\delta
 \text{ for every }s\in F\right\}.
\]
Here $(s\varphi)(v)=s\varphi(v)$.
\end{definition}

\begin{definition}
A continuous pseudometric $\rho$ is dynamically generating if for every distinct $x,y\in X$ there exists $s\in\Gamma$ such that $\rho(sx,sy)>0$.
\end{definition}
 Recall that every compatible metric is dynamically generating, since the identity element already separates distinct points.

\begin{definition}
Let $\Gamma$ be a countable discrete sofic group acting by homeomorphisms on a compact metrizable
space $X$, and let $\Sigma$ be a sofic approximation of $\Gamma$ and $\rho$ a dynamically generating continuous pseudometric on $X$.
For $1\leq p\leq\infty$ and $\epsilon>0$, we define 
\[
 h_{\Sigma,p}(\varepsilon,\rho)
 =\inf_{F\Subset\Gamma}\inf_{\delta>0}
 \limsup_{i\to\infty}\frac1{d_i}
 \log \mathrm{S}_\varepsilon\bigl(\Map(\rho,F,\delta,\sigma_i),\rho_p\bigr),
 \tag{2.1}
\]
where $F\Subset\Gamma$ means that $F$ is a nonempty and finite subset of $\Gamma$. If $\Map(\rho,F,\delta,\sigma_i)$ is
empty for all sufficiently large $i$, we set 
$ h_{\Sigma,p}(\varepsilon,\rho)=-\infty$. 
Define the \emph{lower sofic $p$-metric mean dimension with respect to  $\rho$}  as
\[
 \underline{\mdim}_{\Sigma,p}(X,\rho)
 =\liminf_{\varepsilon\downarrow0}
 \frac{h_{\Sigma,p}(\varepsilon,\rho)}{\log(1/\varepsilon)}.
 \tag{2.2}
\]
We also  define the \emph{upper sofic $p$-metric mean dimension with respect to  $\rho$} as 
\[
 \overline{\mdim}_{\Sigma,p}(X,\rho)
 =\limsup_{\varepsilon\downarrow0}
 \frac{h_{\Sigma,p}(\varepsilon,\rho)}{\log(1/\varepsilon)}.
 \tag{2.3}
\]
Finally, we define the \emph{ sofic $p$-metric mean dimension } by
\[
 \mdim_{\Sigma,\mathrm M,p}(X,\Gamma)
 =\inf_\rho \underline{\mdim}_{\Sigma,p}(X,\rho),
 \tag{2.4}
\]
where the infimum is taken over dynamically generating continuous pseudometrics. For $p=\infty$ this is the usual sofic metric mean dimension as defined in Li \cite{Li}.
\end{definition}
\begin{remark}\label{rem:compatible}
In the definition of sofic $p$-metric mean dimension,  the infimum can equivalently be taken over all compatible metrics $\rho$ on $X$. See \cite{Hayes,Li} for details.
\end{remark}

We need the following Lemmas.
\begin{lemma}\cite[Lemma 2.14]{Hayes}
\label{lem:hayes-input}
Let $\Gamma$ be a countable discrete sofic group acting by homeomorphisms on a compact metrizable
space $X$, and let $\Sigma$ be a sofic approximation of $\Gamma$.
 Let $\rho$ and $\rho'$ be dynamically generating continuous pseudometrics. For every finite $F\subseteq\Gamma$ and every $\delta>0$, there exist a finite $F'\subseteq\Gamma$ and $\delta'>0$ such that, for all sufficiently large $i$,
 \[
  \Map(\rho',F',\delta',\sigma_i)
  \subseteq \Map(\rho,F,\delta,\sigma_i).
  \tag{2.5}
 \]

\end{lemma}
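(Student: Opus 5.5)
The plan is to exploit the fact that $\rho$ and $\rho'$ are both dynamically generating continuous pseudometrics on the compact space $X$. The aim is to control $\rho$-closeness of $\varphi\circ\sigma_s$ and $s\varphi$ by $\rho'$-closeness along finitely many group elements, using the approximate equivariance of $\varphi$ together with soficity.

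\textbf{Step 1 (comparing the two pseudometrics).} Since $\rho'$ is dynamically generating and $X$ is compact, I would first show the following. For every $\eta>0$ there are a finite set $K\subseteq\Gamma$ and $\kappa>0$ such that $\max_{t\in K}\rho'(tx,ty)<\kappa$ implies $\rho(x,y)<\eta$.

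The argument is by compactness. The set $\{(x,y):\rho(x,y)\geq\eta\}$ is compact. Each of its points has $x\neq y$, so some $t\in\Gamma$ has $\rho'(tx,ty)>0$, and this positivity persists on a neighborhood. Finitely many such neighborhoods cover the set, which gives $K$ and $\kappa$.

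\textbf{Step 2 (choosing $F'$ and $\delta'$).} Take $F'=F\cup K\cup KF\cup\{e\}$ and a small $\delta'$. Let $\varphi\in\Map(\rho',F',\delta',\sigma_i)$. By Chebyshev's inequality, for each $u\in F'$ the set of $v$ with $\rho'(\varphi(\sigma_u v),u\varphi(v))\geq\sqrt{\delta'}$ has density at most $\delta'$.

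Soficity gives, for $i$ large, that $\sigma_t\sigma_s(v)=\sigma_{ts}(v)$ for all $t\in K$, $s\in F$, except on a set of density at most $\delta'$. Outside a bad set $B$ of density $O(|F'|^2\delta')$, we therefore have, for each $s\in F$ and $t\in K$:
\[
 \rho'\bigl(t\varphi(\sigma_s v),\varphi(\sigma_t\sigma_s v)\bigr)<\sqrt{\delta'},
 \qquad
 \varphi(\sigma_t\sigma_s v)=\varphi(\sigma_{ts}v),
\]
\[
 \rho'\bigl(\varphi(\sigma_{ts}v),ts\varphi(v)\bigr)<\sqrt{\delta'}.
\]
The first estimate is applied at the point $\sigma_s v$. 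So we need $\sigma_s v$ to avoid the $t$-bad set, and since $\sigma_s$ is a permutation, the preimage of that set has the same density.

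Combining these gives $\rho'(t\varphi(\sigma_s v),t\,s\varphi(v))<2\sqrt{\delta'}$ for all $t\in K$. If $2\sqrt{\delta'}<\kappa$, Step 1 yields $\rho(\varphi(\sigma_s v),s\varphi(v))<\eta$ for $v\notin B$.

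\textbf{Step 3 (estimating $\rho_2$).} Let $M=\diam_\rho(X)<\infty$. Then
\[
 \rho_2(\varphi\circ\sigma_s,s\varphi)^2\leq\eta^2+M^2\,\frac{|B|}{d_i}.
\]
Choosing $\eta$ with $\eta^2<\delta^2/2$ and then $\delta'$ small enough that $M^2\,C|F'|^2\delta'<\delta^2/2$ gives $\rho_2<\delta$. Hence $\varphi\in\Map(\rho,F,\delta,\sigma_i)$ for all $i$ beyond the soficity threshold.

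\textbf{Main obstacle.} The delicate part is the bookkeeping in Step 2. One must make sure that the exceptional sets, including those pulled back under $\sigma_s$, have uniformly small density, and that $\delta'$ is chosen only after $K$, $\kappa$ and $\eta$ are fixed. Step 1's compactness argument is routine.
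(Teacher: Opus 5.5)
Your proof is correct. It is the standard argument: compactness lets finitely many $\rho'$-translates control $\rho$, then Chebyshev and soficity handle the bookkeeping, with $F'\supseteq K\cup KF$ and $\delta'$ chosen after $\eta,K,\kappa$. The paper does not reprove this lemma; it simply cites \cite[Lemma 2.14]{Hayes}, whose proof rests on this same mechanism, so your approach matches what the paper relies on.
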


\begin{lemma}[Metric independence of the empty-model alternative]
\label{lem:empty-independent}
Let $\rho$ and $\rho'$ be dynamically generating continuous pseudometrics on $X$. The following are equivalent:
\begin{enumerate}[label=\textup{(\roman*)}]
 \item there exist $F\Subset\Gamma$ and $\delta>0$ such that
 $\Map(\rho,F,\delta,\sigma_i)=\varnothing$ for all sufficiently large $i$;
 \item there exist $F'\Subset\Gamma$ and $\delta'>0$ such that
 $\Map(\rho',F',\delta',\sigma_i)=\varnothing$ for all sufficiently large $i$.
\end{enumerate}
\end{lemma}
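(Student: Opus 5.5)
The equivalence follows directly from Lemma~\ref{lem:hayes-input}, applied once in each direction. By the symmetry of the statement (swap the roles of $\rho$ and $\rho'$), it suffices to prove \textup{(i)}$\Rightarrow$\textup{(ii)}.

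Assume \textup{(i)}. Fix $F\Subset\Gamma$, $\delta>0$ and $i_0$ such that
\[
\Map(\rho,F,\delta,\sigma_i)=\varnothing \quad\text{for all } i\ge i_0 .
\]
Apply Lemma~\ref{lem:hayes-input} to the ordered pair $(\rho,\rho')$ with this $F$ and $\delta$. It yields a finite $F'\subseteq\Gamma$, a $\delta'>0$ and an $i_1$ such that
\[
\Map(\rho',F',\delta',\sigma_i)\subseteq \Map(\rho,F,\delta,\sigma_i)\quad\text{for all } i\ge i_1 .
\]
Hence $\Map(\rho',F',\delta',\sigma_i)=\varnothing$ for every $i\ge\max(i_0,i_1)$, which is \textup{(ii)}.

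If Lemma~\ref{lem:hayes-input} happens to return $F'=\varnothing$, replace it by $F'\cup\{e\}$. Enlarging the finite set only adds constraints, so the Map set can only shrink and the inclusion is preserved. This guarantees $F'\Subset\Gamma$ in the paper's sense of nonempty.

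There is no real obstacle. All the analytic content, namely that approximate equivariance in $\rho'$ on a larger finite set forces approximate equivariance in $\rho$, is contained in Lemma~\ref{lem:hayes-input}. Its hypotheses require both pseudometrics to be dynamically generating, and that holds for both here.
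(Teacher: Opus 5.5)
Your proposal is correct and is essentially the paper's proof, which simply invokes Lemma~\ref{lem:hayes-input} in each direction: the inclusion of model spaces for large $i$ turns eventual emptiness for one pseudometric into eventual emptiness for the other. Your extra step of replacing a possibly empty $F'$ by $F'\cup\{e\}$ is a harmless tidy-up, since enlarging the finite set only shrinks the model space.
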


\begin{proof}
It follows immediately from Lemma \ref{lem:hayes-input}.
\end{proof}

\begin{remark}[Global empty-model alternative]\label{rem:empty-models}
By Lemma~\ref{lem:empty-independent}, eventual emptiness is independent of the chosen dynamically generating continuous pseudometric. If it occurs, then
\[
 h_{\Sigma,p}(\varepsilon,\rho)=-\infty
\]
for every dynamically generating $\rho$, every $\varepsilon>0$, and every $1\leq p\leq\infty$; hence all lower and  upper sofic $p$-metric mean dimension with respect to  $\rho$ equal $-\infty$. We also have the sofic $p$-metric mean dimension equal $-\infty$.

Otherwise, for every dynamically generating $\rho$, every $F\Subset\Gamma$, and every $\delta>0$, the corresponding model space is nonempty for infinitely many $i$. Since every nonempty pseudometric space has covering number at least one, the relevant $\limsup$ is nonnegative. Consequently
\[
 h_{\Sigma,p}(\varepsilon,\rho)\geq0
\]
for every $\varepsilon>0$ and every $1\leq p\leq\infty$. In this second alternative all lower and  upper sofic $p$-metric mean dimension with respect to  $\rho$, and sofic $p$-metric mean dimension, belong to $[0,+\infty]$.
\end{remark}

\begin{lemma}\label{lem:scale-change}
Let $\varepsilon_0>0$, $c>0$ and $\alpha>0$. Let $a,b:(0,\varepsilon_0)\to[0,+\infty]$, and let $e:(0,\varepsilon_0)\to\mathbb R$ satisfy $e(t)\to0$ as $t\downarrow0$. Suppose that
\[
 a(ct^\alpha)\leq b(t)+e(t)
\]
for all sufficiently small $t>0$. Then
\[
 \liminf_{r\downarrow0}\frac{a(r)}{\log(1/r)}
 \leq \frac1\alpha
 \liminf_{t\downarrow0}\frac{b(t)}{\log(1/t)},
\]
\[
 \limsup_{r\downarrow0}\frac{a(r)}{\log(1/r)}
 \leq \frac1\alpha
 \limsup_{t\downarrow0}\frac{b(t)}{\log(1/t)}.
\]
\end{lemma}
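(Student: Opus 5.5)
The plan is to substitute $r = ct^\alpha$ directly and compare logarithmic denominators. As $t\downarrow0$, the map $t\mapsto ct^\alpha$ is a continuous increasing bijection from a neighbourhood $(0,t_0)$ onto $(0,ct_0^\alpha)$. Hence taking $\liminf_{r\downarrow0}$ of a function of $r$ is the same as taking $\liminf_{t\downarrow0}$ of that function evaluated at $r=ct^\alpha$, and likewise for $\limsup$. So I will write
\[
 \liminf_{r\downarrow0}\frac{a(r)}{\log(1/r)}=\liminf_{t\downarrow0}\frac{a(ct^\alpha)}{\log(1/(ct^\alpha))},
\]
and similarly with $\limsup$.

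Next I will use the hypothesis. For all sufficiently small $t$ we have $\log(1/(ct^\alpha))=\alpha\log(1/t)-\log c>0$. Since $a\geq0$, the inequality $a(ct^\alpha)\leq b(t)+e(t)$ gives
\[
 \frac{a(ct^\alpha)}{\log(1/(ct^\alpha))}\leq \frac{b(t)+e(t)}{\alpha\log(1/t)-\log c}.
\]
This holds pointwise with values in $[0,+\infty]$; when $b(t)=+\infty$ the right side is $+\infty$ and there is nothing to check.

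It then remains to show that the $\liminf$ (resp. $\limsup$) of the right-hand side equals $\frac1\alpha\liminf$ (resp. $\frac1\alpha\limsup$) of $b(t)/\log(1/t)$. Write the right-hand side as
\[
 \frac{b(t)}{\log(1/t)}\cdot\frac{\log(1/t)}{\alpha\log(1/t)-\log c}+\frac{e(t)}{\alpha\log(1/t)-\log c}.
\]
The second factor of the first term tends to $1/\alpha$, and the second term tends to $0$ because $e(t)\to0$ and the denominator tends to $+\infty$. Multiplying a $[0,+\infty]$-valued function by a factor converging to a positive finite limit multiplies its $\liminf$ and $\limsup$ by that limit. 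Adding a term that tends to $0$ does not change them, including in the infinite case. Monotonicity of $\liminf$ and $\limsup$ then yields both displayed inequalities.

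The only point requiring care is the extended-real arithmetic when $b$ takes the value $+\infty$ or the limits are infinite. This is handled by the nonnegativity of $a$ and $b$ and by the positivity of the limiting factor $1/\alpha$, so no $0\cdot\infty$ or $\infty-\infty$ ambiguity arises.
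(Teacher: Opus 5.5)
Your proposal is correct and follows essentially the same route as the paper: substitute $r=ct^\alpha$, bound $a(ct^\alpha)/\log(1/(ct^\alpha))$ by $\frac{b(t)}{\log(1/t)}\cdot\frac{\log(1/t)}{\log(1/(ct^\alpha))}+\frac{e(t)}{\log(1/(ct^\alpha))}$, and pass to $\liminf$ and $\limsup$ using that the ratio of logarithms tends to $1/\alpha$ and the error term tends to $0$. Your explicit treatment of the extended-real cases (where $b(t)=+\infty$ or the limits are infinite) is slightly more careful than the paper's brief sketch, but the argument is the same.
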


\begin{proof}
Note that $r=ct^\alpha$ is a bijection between sufficiently small positive $t$ and sufficiently small positive $r$. Moreover, when $t\downarrow0$,
\[
 \frac{\log(1/t)}{\log(1/(ct^\alpha))}\longrightarrow\frac1\alpha
 \quad\text{and}\quad
 \frac{e(t)}{\log(1/(ct^\alpha))}\longrightarrow0,
\]
Since $ a(ct^\alpha)\leq b(t)+e(t)$ for all sufficiently small $t>0$,
    then
    \[
        \frac{a(ct^{\alpha})}{\log(1/(ct^{\alpha}))}\leq \frac{b(t)}{\log(1/t)} \frac{\log(1/t)}{\log(1/(ct^\alpha))}+ \frac{e(t)}{\log(1/(ct^\alpha))}.
    \]
As $t\downarrow0 $  iff $r\downarrow0$, then we can get the desire result.
\end{proof}

\section{The \texorpdfstring{$\ell^1$--$\ell^\infty$}{l1--linfinity} comparison for tame metrics}\label{sec:l1linfty}
This section absorbs and develops the ideas of Hayes \cite[Proposition 2.20]{Hayes}. That work locates sparse mismatching coordinates via a comparison between $\ell^1$ and $\ell^\infty$ coverings; the following lemma extends this to settings with arbitrary covering growth. The novelty here lies in analyzing the resulting error term under tame growth conditions and integrating it with the metric-minorant argument of Proposition \ref{prop:monotone} and Lemma \ref{lem:tame-minorant}.

\begin{proposition}\label{prop:monotone}
Let $\Gamma$ be a countable discrete sofic group acting by homeomorphisms on a compact metrizable
space $X$ and $\Sigma$ a sofic approximation of $\Gamma$.
Let $\rho_0$ and $\rho_1$ be dynamically generating continuous pseudometrics on $X$ satisfying
\[
 \rho_0\leq \rho_1.
\]
Then, for every $1\leq p\leq\infty$,
\[
 \underline{\mdim}_{\Sigma,p}(X,\rho_0)
 \leq \underline{\mdim}_{\Sigma,p}(X,\rho_1)
\]
and
\[
 \overline{\mdim}_{\Sigma,p}(X,\rho_0)
 \leq \overline{\mdim}_{\Sigma,p}(X,\rho_1).
\]
\end{proposition}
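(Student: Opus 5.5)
We will compare $h_{\Sigma,p}(\varepsilon,\rho_0)$ and $h_{\Sigma,p}(\varepsilon,\rho_1)$ pointwise in $\varepsilon$ and then pass to the $\liminf$/$\limsup$. Two facts drive the comparison. First, $\rho_0\le\rho_1$ implies $(\rho_0)_p\le(\rho_1)_p$ pointwise on maps $[d]\to X$, for every $1\le p\le\infty$. Second, the model spaces are nested in the wrong direction, but Lemma~\ref{lem:hayes-input} repairs this.

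\textbf{Step 1: nesting of model spaces.} From $\rho_0\le\rho_1$ we get $\Map(\rho_1,F,\delta,\sigma)\subseteq\Map(\rho_0,F,\delta,\sigma)$. This inclusion points the wrong way for an upper bound on the $\rho_0$ side. So we apply Lemma~\ref{lem:hayes-input} with $\rho=\rho_0$ and $\rho'=\rho_1$. For every $F\Subset\Gamma$ and $\delta>0$ it gives $F'$ and $\delta'$ such that
\[
\Map(\rho_1,F',\delta',\sigma_i)\subseteq\Map(\rho_0,F,\delta,\sigma_i)
\]
for all large $i$. We still need the opposite inclusion. We may enlarge $F$ and shrink $\delta$, since the infimum in (2.1) is monotone. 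Replacing $(F,\delta)$ by $(F\cup F',\min(\delta,\delta'))$, we get
\[
\Map(\rho_0,F\cup F',\min(\delta,\delta'),\sigma_i)\subseteq\Map(\rho_0,F,\delta,\sigma_i),
\]
but not containment in the $\rho_1$ space. The better route is to apply Lemma~\ref{lem:hayes-input} the other way, with $\rho=\rho_1$ and $\rho'=\rho_0$. Given $(F,\delta)$ for $\rho_1$, it yields $(F'',\delta'')$ with
\[
\Map(\rho_0,F'',\delta'',\sigma_i)\subseteq\Map(\rho_1,F,\delta,\sigma_i)
\]
for large $i$. Combining this with $\Map(\rho_1,\cdot)\subseteq\Map(\rho_0,\cdot)$ shows that the two families of model spaces are cofinal in each other.

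\textbf{Step 2: covering numbers.} Fix $(F,\delta)$ for $\rho_1$ and take $(F'',\delta'')$ as in Step 1. Any $(\rho_1)_p$-$\varepsilon$-spanning set $E\subseteq\Map(\rho_1,F,\delta,\sigma_i)$ is also $(\rho_0)_p$-$\varepsilon$-spanning for $\Map(\rho_0,F'',\delta'',\sigma_i)$, because that set lies inside $\Map(\rho_1,F,\delta,\sigma_i)$ and $(\rho_0)_p\le(\rho_1)_p$. There is a subtlety: a spanning set is usually required to lie in the space being covered. The definition as stated only requires $E\subseteq X$ and allows external centers. If internal centers are demanded, we pay the usual factor $\varepsilon\to2\varepsilon$, and this factor disappears in the ratio with $\log(1/\varepsilon)$. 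Thus
\[
\mathrm S_{2\varepsilon}\bigl(\Map(\rho_0,F'',\delta'',\sigma_i),(\rho_0)_p\bigr)\le \mathrm S_\varepsilon\bigl(\Map(\rho_1,F,\delta,\sigma_i),(\rho_1)_p\bigr)
\]
for large $i$. Taking $\limsup_i$, then the infimum over $(F'',\delta'')$ on the left and over $(F,\delta)$ on the right, gives $h_{\Sigma,p}(2\varepsilon,\rho_0)\le h_{\Sigma,p}(\varepsilon,\rho_1)$. The empty-model case is consistent with this inequality by Remark~\ref{rem:empty-models}, where both sides equal $-\infty$. In the other alternative both sides are nonnegative.

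\textbf{Step 3: conclusion.} Apply Lemma~\ref{lem:scale-change} with $a=h_{\Sigma,p}(\cdot,\rho_0)$, $b=h_{\Sigma,p}(\cdot,\rho_1)$, $c=2$, $\alpha=1$ and $e=0$ in the nonnegative alternative. This yields both the $\liminf$ and the $\limsup$ inequalities. The $-\infty$ alternative is trivial.

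The main obstacle is Step 1, namely getting the direction of model-space inclusion right. The trivial inclusion goes the wrong way, so the argument hinges on invoking Lemma~\ref{lem:hayes-input} with $\rho_0$ in the role of $\rho'$.
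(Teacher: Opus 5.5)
Your proposal is correct and follows the paper's proof essentially verbatim. Apply Lemma~\ref{lem:hayes-input} with $\rho_0$ in the role of $\rho'$ to get $\Map(\rho_0,F'',\delta'',\sigma_i)\subseteq\Map(\rho_1,F,\delta,\sigma_i)$; compare $\mathrm S_{2\varepsilon}$ with $\mathrm S_\varepsilon$ to get $h_{\Sigma,p}(2\varepsilon,\rho_0)\le h_{\Sigma,p}(\varepsilon,\rho_1)$; then finish with Lemma~\ref{lem:scale-change} using $c=2$ and $\alpha=1$.

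One small correction: the paper's definition, applied to the pseudometric space $(\Map(\cdot),\rho_p)$, does require the centers to lie inside the model space, so it does not allow external centers. The $2\varepsilon$ version is therefore the one actually needed. The paper obtains it exactly as you indicate, by picking a point of the smaller model space in each $\varepsilon$-ball that meets it.
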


\begin{proof}
Fix $\varepsilon>0$, a finite set $F\subseteq\Gamma$, and $\delta>0$. By (2.5), there exist $F'\Subset\Gamma$ and $\delta'>0$ such that, for all sufficiently large $i$,
\[
 A_i:=\Map(\rho_0,F',\delta',\sigma_i)
 \subseteq
 B_i:=\Map(\rho_1,F,\delta,\sigma_i).
 \tag{3.1}
\]

Let $E_i\subseteq B_i$ be a $(\rho_{1,p},\varepsilon)$-spanning set  with the least cardinality $ \mathrm{S}_\varepsilon(B_i,\rho_{1,p})$. For every $z\in E_i$ whose open $\rho_{1,p}$-ball of radius $\varepsilon$ meets $A_i$, choose one point $a_z$ in that intersection. If $x\in A_i$, choose $z\in E_i$ with $\rho_{1,p}(x,z)<\varepsilon$. Then
\[
 \rho_{0,p}(x,a_z) \leq \rho_{1,p}(x,a_z)
 \leq \rho_{1,p}(x,z)+\rho_{1,p}(z,a_z)
 <2\varepsilon.
\]
Consequently,
\[
 \mathrm{S}_{2\varepsilon}(A_i,\rho_{0,p})
 \leq \mathrm{S}_\varepsilon(B_i,\rho_{1,p}),
 \tag{3.2}
\]
 which implies that
\[
 h_{\Sigma,p}(2\varepsilon,\rho_0) \leq \limsup_{i\to\infty}\frac1{d_i}
 \log \mathrm{S}_{2\varepsilon}(A_i,\rho_{0,p})
 \leq
 \limsup_{i\to\infty}\frac1{d_i}
 \log \mathrm{S}_\varepsilon(B_i,\rho_{1,p}).
\]
Taking the infimum over the originally arbitrary $F$ and $\delta$ gives
\[
 h_{\Sigma,p}(2\varepsilon,\rho_0)
 \leq h_{\Sigma,p}(\varepsilon,\rho_1).
 \tag{3.3}
\]
This implies the desired result.
If the global empty-model alternative of Remark~\ref{rem:empty-models} holds, the conclusion is immediate.
\end{proof}
\begin{definition}\label{def tame gowth}\cite[Definition 3.8]{LT}
A compact metric space $(X,\rho)$ has \emph{tame growth of covering numbers}, if for every $a>0$,
\[
 \lim_{\varepsilon\downarrow0}
 \varepsilon^a\log \mathrm{S}_\varepsilon(X,\rho)=0.
 \tag{3.4}
\]
\end{definition}

The following lemma is obtained from the standard Hilbert-cube construction used by Lindenstrauss and Tsukamoto \cite[Lemma 3.10]{LT}. We include the complete proof because the order relation $\rho'\leq\rho$, which is essential for our  argument, is not merely a topological equivalence statement.

\begin{lemma}\label{lem:tame-minorant}
Let $(X,\rho)$ be a compact metric space. There exists a compatible metric $\rho'$ on $X$ such that
\[
 \rho'\leq\rho
\]
and $(X,\rho')$ has tame growth of covering numbers.
\end{lemma}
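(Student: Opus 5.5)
We embed $(X,\rho)$ into a Hilbert cube by a $1$-Lipschitz map and pull back a weighted product metric whose weights decay fast enough to force tame growth. Without loss of generality normalize so that $\diam(X,\rho)\leq 1$. If the diameter is larger, replace $\rho$ by $\min(\rho,1)$; this is still a compatible metric, is $\leq\rho$, and a minorant of it is a minorant of $\rho$.

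\textbf{Construction.} Choose a countable dense set $\{x_n\}_{n\geq1}\subseteq X$ and set $f_n(x)=\rho(x,x_n)\in[0,1]$. Each $f_n$ is $1$-Lipschitz for $\rho$. Define
\[
 \rho'(x,y)=\sum_{n\geq1}2^{-n}\,|f_n(x)-f_n(y)|.
\]
Then $\rho'\leq\sum_n 2^{-n}\rho(x,y)=\rho(x,y)$. The function $\rho'$ is a continuous pseudometric. It separates points: if $x\neq y$, pick $x_n$ with $\rho(x_n,x)<\rho(x,y)/2$; then $f_n(x)<\rho(x,y)/2<f_n(y)$. A continuous metric on a compact space is compatible, since the identity $(X,\rho)\to(X,\rho')$ is a continuous bijection from a compact space to a Hausdorff space. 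So $\rho'$ is a compatible metric with $\rho'\leq\rho$.

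\textbf{Tame growth.} This is the only real computation, and the weights $2^{-n}$ are what make it work. Fix $\varepsilon\in(0,1)$ and let $N=\lceil\log_2(2/\varepsilon)\rceil$, so the tail $\sum_{n>N}2^{-n}\leq 2^{-N}\leq\varepsilon/2$. Consider the map $\Phi_N=(f_1,\dots,f_N):X\to[0,1]^N$. Two points whose images lie in the same cell of a cubical grid of mesh $\varepsilon/2$ in $[0,1]^N$ satisfy
\[
 \sum_{n\leq N}2^{-n}|f_n(x)-f_n(y)|\leq\varepsilon/2 .
\]
Together with the tail bound, this gives $\rho'(x,y)\leq\varepsilon$. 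Picking one point of $X$ in each nonempty cell therefore yields a $(\rho',2\varepsilon)$-spanning set, the factor $2$ absorbing the strict inequality. Hence
\[
 \mathrm S_{2\varepsilon}(X,\rho')\leq\lceil 2/\varepsilon\rceil^{N}.
\]
This gives
\[
 \log \mathrm S_{2\varepsilon}(X,\rho')\leq N\log(3/\varepsilon)=O\bigl(\log^2(1/\varepsilon)\bigr).
\]
For every $a>0$ we have $\varepsilon^a\log^2(1/\varepsilon)\to0$, so (3.4) holds after the harmless rescaling $\varepsilon\mapsto2\varepsilon$. Therefore $(X,\rho')$ has tame growth of covering numbers.

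The one point needing care is that tameness must hold at every exponent $a>0$. A polylogarithmic bound $\log\mathrm S_\varepsilon=O(\log^2(1/\varepsilon))$ achieves this, and it comes from the number of active coordinates growing only logarithmically in $1/\varepsilon$. This is exactly why geometric weights are used rather than slowly decaying ones.
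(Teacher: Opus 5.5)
Your proof is correct and follows essentially the same route as the paper. Both pull back the weighted metric $\sum_n 2^{-n}|u_n-v_n|$ along the $1$-Lipschitz map $x\mapsto(\rho(x,x_n))_n$, use the reverse triangle inequality to get $\rho'\leq\rho$, and obtain $\log \mathrm{S}_\varepsilon=O\bigl((\log(1/\varepsilon))^2\bigr)$ from a grid on the first $O(\log(1/\varepsilon))$ coordinates. The differences are cosmetic: you truncate to $\min(\rho,1)$ where the paper rescales by $\max\{1,\diam(X,\rho)\}$, and you use grid mesh $\varepsilon/2$ with a factor-$2$ rescaling where the paper uses mesh $\varepsilon/4$.
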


\begin{proof}
Set
\[
 M=\max\{1,\diam(X,\rho)\},
 \qquad \widetilde\rho=\rho/M.
\]
Then $\diam(X,\widetilde\rho)\leq1$ and
$\widetilde\rho\leq\rho$. Choose a dense sequence $(x_n)_{n\geq1}$ in $X$ and define
\[
 \Phi:X\longrightarrow [0,1]^{\N},
 \qquad
 \Phi(x)=(\widetilde\rho(x,x_n))_{n\geq1}.
\]
On $[0,1]^{\N}$ set
\[
 D(u,v)=\sum_{n=1}^\infty 2^{-n}|u_n-v_n|.
\]
The map $\Phi$ is injective. Indeed, if $x\neq y$ and $a=\widetilde\rho(x,y)>0$, choose $x_n$ with $\widetilde\rho(x,x_n)<a/3$. Then
$\widetilde\rho(y,x_n)>2a/3$, so the $n$th coordinates of $\Phi(x)$ and $\Phi(y)$ are different. Since $X$ is compact and the Hilbert cube is Hausdorff, $\Phi$ is a topological embedding. Define
\[
 \rho'(x,y)=D(\Phi(x),\Phi(y)).
\]
The reverse triangle inequality gives
\[
 \rho'(x,y)=\sum_{n=1}^\infty 2^{-n}|\widetilde\rho(x,x_n)-\widetilde\rho(y,x_n)|
 \leq \sum_{n=1}^\infty 2^{-n}\widetilde\rho(x,y)
 =\widetilde\rho(x,y).
\]
Thus $\rho'$ is a compatible metric and, in terms of the notation before normalization,
\[
 \rho'\leq\widetilde\rho\leq\rho.
\]

It remains to check tame growth. Given $0<\varepsilon<1/4$, choose $N$ so that $2^{-N}<\varepsilon/4$. Partition $[0,1]$ into at most $1+4/\varepsilon$ intervals of length at most $\varepsilon/4$, and use the resulting product partition on the first $N$ coordinates. If a cell meets $\Phi(X)$, choose one representative from that intersection. Two points of $\Phi(X)$ lying in the same cell have first-coordinate contribution to $D$ at most $\varepsilon/4$ and tail contribution at most $2^{-N}<\varepsilon/4$. In fact the distance between two points in the same cell is less than $\varepsilon/2$, so the chosen representatives are certainly $(D,\varepsilon)$-spanning set in $\Phi(X)$. Hence
\[
 \mathrm{S}_\varepsilon(X,\rho')=\mathrm{S}_\varepsilon(\Phi(X),D)
 \leq \left(1+\frac4\varepsilon\right)^N.
\]
Since $N=O(\log(1/\varepsilon))$, it follows that
\[
 \log \mathrm{S}_\varepsilon(X,\rho')
 =O\bigl((\log(1/\varepsilon))^2\bigr),
\]
which implies (3.4).
\end{proof}

The following estimate will play a key role in the subsequent calculations.

\begin{lemma}\label{lem:sparse}
Let $(X,\rho)$ be a compact metric space, $m \in \mathbb{N}$ and $A\subseteq X^m$. Let  $0<\varepsilon<1$ and $0<\alpha<1$. Put
\[
 \eta=\varepsilon^\alpha,
 \qquad
 \theta=\varepsilon^{1-\alpha}.
\]
Then
\[
 \mathrm{S}_{2\eta}(A,\rho_\infty)
 \leq
 \mathrm{S}_\varepsilon(A,\rho_1)
 \sum_{j=0}^{\lfloor\theta m\rfloor}
 \binom mj \mathrm{S}_\eta(X,\rho)^j.
 \tag{3.5}
\]
\end{lemma}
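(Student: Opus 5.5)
The plan is a covering argument in two layers. First take a minimal $(\rho_1,\varepsilon)$-spanning set $E\subseteq A$ of cardinality $\mathrm{S}_\varepsilon(A,\rho_1)$. Then, around each $z\in E$, cover the $\rho_1$-ball of radius $\varepsilon$ in $\rho_\infty$ at scale $2\eta$, using the fact that $\ell^1$-closeness forces all but a sparse set of coordinates to be $\eta$-close.

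\textbf{Markov step.} Let $x\in A$ and $z\in E$ with $\rho_1(x,z)<\varepsilon$, so that $\sum_v\rho(x(v),z(v))<\varepsilon m$. Put $J=\{v:\rho(x(v),z(v))\geq\eta\}$. Then $|J|\eta\leq \varepsilon m$, so
\[
|J|\leq \varepsilon m/\eta=\theta m,
\]
and hence $|J|\leq\lfloor\theta m\rfloor$.

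\textbf{Second layer.} Fix a minimal $(\rho,\eta)$-spanning set $Y\subseteq X$ with $|Y|=\mathrm{S}_\eta(X,\rho)$. For each triple $(z,J,\tau)$ with $z\in E$, $J\subseteq[m]$, $|J|\leq\lfloor\theta m\rfloor$ and $\tau:J\to Y$, define $w=w(z,J,\tau)\in X^m$ by $w(v)=z(v)$ for $v\notin J$ and $w(v)=\tau(v)$ for $v\in J$.
\begin{itemize}
\item For the $x$ above, choose $\tau(v)\in Y$ with $\rho(x(v),\tau(v))<\eta$ for each $v\in J$.
\item Then $\rho_\infty(x,w)<\eta$, since off $J$ we have $\rho(x(v),z(v))<\eta$ by the definition of $J$.
\end{itemize}
The number of triples is exactly $|E|\sum_{j=0}^{\lfloor\theta m\rfloor}\binom mj \mathrm{S}_\eta(X,\rho)^j$.

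\textbf{Move centers into $A$.} The points $w$ need not lie in $A$, and the spanning number is taken over subsets of $A$. So for each triple whose $\rho_\infty$-ball of radius $\eta$ meets $A$, pick a point $a_w$ in that intersection, exactly as in the proof of Proposition~\ref{prop:monotone}. The triangle inequality gives $\rho_\infty(x,a_w)<2\eta$. This explains the factor $2$ in $\mathrm{S}_{2\eta}$, and (3.5) follows.

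The main subtlety is keeping the inequalities strict enough that $|J|\leq\theta m$, rather than something slightly larger, and remembering to relocate the centers into $A$. Neither should cause trouble. The boundary case where $A$ is empty is trivial.
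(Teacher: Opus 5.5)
Your proof is correct and is essentially the paper's argument: the same $\ell^1$-net $E\subseteq A$, the same Markov bound on the exceptional coordinate set $J$, and the same count of codes $(z,J,\tau)$. The only difference is cosmetic. The paper groups the points of $A$ by their code and picks one representative per nonempty class, so the factor $2$ comes from comparing two points with a common code. You instead build the hybrid point $w(z,J,\tau)$ and relocate it into $A$, which gives the same bound.
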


\begin{proof}
If $A=\varnothing$, then both sides of (3.5) are interpreted with the convention $\mathrm{S}_\varepsilon(\varnothing,\cdot)=0$, and the assertion is immediate. Assume $A\neq\varnothing$. Let $E\subseteq A$ be a $(\rho_1,\varepsilon)$-spanning set  and  $Q\subseteq X$  a $(\rho,\eta)$-spanning set. For every $\varphi\in A$, make and fix one choice
$\psi(\varphi)\in E$ such that
$\rho_1(\varphi,\psi(\varphi))<\varepsilon$, and define
\[
 J(\varphi)
 =\{v\in[m]:\rho(\varphi(v),\psi(\varphi)(v))\geq\eta\}.
\]
By Markov's inequality,
\[
 |J(\varphi)|\eta
 \leq \sum_{v=1}^m\rho(\varphi(v),\psi(\varphi)(v))
 <m\varepsilon,
\]
so $|J(\varphi)|<\theta m$.

For each $\varphi\in A$ and each $v\in J(\varphi)$, make and fix one choice
$q_v(\varphi)\in Q$ satisfying
$\rho(\varphi(v),q_v(\varphi))<\eta$. Define the code of $\varphi$ to be
\[
 \mathcal C(\varphi)
 =\left(\psi(\varphi),J(\varphi),
 (q_v(\varphi))_{v\in J(\varphi)}\right).
\]
Classify points of $A$ according to this code.
If $\varphi$ and $\varphi'$ have the same code, write the common code as
$(\psi,J,(q_v)_{v\in J})$. For $v\notin J$, we know that $\rho(\varphi(v),\psi(v))< \eta,\rho(\varphi'(v),\psi(v))< \eta$, then 
$$\rho(\varphi(v),\varphi'(v))\leq\rho(\varphi(v),\psi(v))+\rho(\varphi'(v),\psi(v))<2\eta.$$
For $v\in J$, we have $\rho(\varphi(v),q_v)<\eta,\rho(\varphi'(v),q_v)<\eta$, then
 $$\rho(\varphi(v),\varphi'(v))\leq\rho(\varphi(v),q_v)+\rho(\varphi'(v),q_v)<2\eta.$$Hence
\[
 \rho_\infty(\varphi,\varphi')<2\eta.
\]
Choose one representative from each nonempty class. These representatives form a $(\rho_{\infty},2\eta)$-spanning subset of $A$, and then $$\mathrm{S}_{2\eta}(A,\rho_\infty)\leq|\mathcal C(\varphi)|\leq \mathrm{S}_\varepsilon(A,\rho_1)
 \sum_{j=0}^{\lfloor\theta m\rfloor}
 \binom mj \mathrm{S}_\eta(X,\rho)^j.$$
 This completes the proof.
\end{proof}

Let
\[
 \cH(t)=-t\log t-(1-t)\log(1-t),\qquad 0<t<1,
\]
be the binary entropy function.

\begin{proposition}\label{prop:tame-equality}
Let $\Gamma$ be a countable discrete sofic group acting by homeomorphisms on a compact metrizable
space $X$ and $\Sigma$ a sofic approximation of $\Gamma$.
Let $\rho$ be a  compatible metric on $X$ with tame growth of covering numbers. Then, for every $1\leq p<\infty$,
\[
 \underline{\mdim}_{\Sigma,1}(X,\rho)
 =\underline{\mdim}_{\Sigma,p}(X,\rho)
 =\underline{\mdim}_{\Sigma,\infty}(X,\rho).
 \tag{3.6}
\]
The analogous equality also holds for the upper sofic $p$-metric mean dimension with respect to  $\rho$.
\end{proposition}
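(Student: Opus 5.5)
The plan is to establish the chain $h_{\Sigma,1}(\varepsilon,\rho)\leq h_{\Sigma,p}(\varepsilon,\rho)\leq h_{\Sigma,\infty}(\varepsilon,\rho)$, together with a reverse inequality that bounds $h_{\Sigma,\infty}$ at scale $2\varepsilon^\alpha$ by $h_{\Sigma,1}$ at scale $\varepsilon$ plus an error that is negligible after dividing by $\log(1/\varepsilon)$. If the global empty-model alternative of Remark~\ref{rem:empty-models} holds, all quantities are $-\infty$ and there is nothing to prove, so we assume the models are nonempty for infinitely many $i$. Then every $h_{\Sigma,p}\ge 0$.

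\textbf{Easy direction.} For maps $[d]\to X$ we have $\rho_1\leq\rho_p\leq\rho_\infty$ by Hölder's inequality, since the averages are normalized. Hence a $(\rho_\infty,\varepsilon)$-spanning set is $(\rho_p,\varepsilon)$-spanning, and a $(\rho_p,\varepsilon)$-spanning set is $(\rho_1,\varepsilon)$-spanning. This gives
\[
\mathrm{S}_\varepsilon(\cdot,\rho_1)\leq \mathrm{S}_\varepsilon(\cdot,\rho_p)\leq \mathrm{S}_\varepsilon(\cdot,\rho_\infty)
\]
on each $\Map(\rho,F,\delta,\sigma_i)$. The desired chain of $h$'s follows by taking $\limsup_i$ and then the infima over $F$ and $\delta$. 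Dividing by $\log(1/\varepsilon)$ yields $\underline{\mdim}_{\Sigma,1}\leq\underline{\mdim}_{\Sigma,p}\leq\underline{\mdim}_{\Sigma,\infty}$, and likewise for the upper versions.

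\textbf{Reverse direction.} Fix $0<\alpha<1$ and apply Lemma~\ref{lem:sparse} with $m=d_i$ and $A=\Map(\rho,F,\delta,\sigma_i)$. Bound the sum by
\[
\sum_{j\le\theta m}\binom mj \mathrm{S}_\eta^j\leq e^{m\cH(\theta)}\,\mathrm{S}_\eta(X,\rho)^{\theta m}
\]
for $\theta<1/2$, using the standard estimate $\sum_{j\leq\theta m}\binom mj\leq e^{m\cH(\theta)}$. Taking $\frac1{d_i}\log$, then $\limsup_i$, then infima over $F$ and $\delta$ (the error term does not depend on $F$, $\delta$ or $i$) gives
\[
h_{\Sigma,\infty}(2\varepsilon^\alpha,\rho)\leq h_{\Sigma,1}(\varepsilon,\rho)+\cH(\varepsilon^{1-\alpha})+\varepsilon^{1-\alpha}\log \mathrm{S}_{\varepsilon^\alpha}(X,\rho).
\]
The key point, and the only place tameness enters, is that the error $e(\varepsilon)$ tends to $0$. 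Clearly $\cH(\varepsilon^{1-\alpha})\to0$. For the second term set $\eta=\varepsilon^\alpha$, so that $\varepsilon^{1-\alpha}=\eta^{(1-\alpha)/\alpha}$. Then
\[
\varepsilon^{1-\alpha}\log \mathrm{S}_{\varepsilon^\alpha}(X,\rho)=\eta^{a}\log \mathrm{S}_\eta(X,\rho)\to0
\]
with $a=(1-\alpha)/\alpha>0$, by (3.4). This is the step I expect to be the crux: without tame growth the second term could blow up, which is why Hayes needed finite upper box dimension.

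\textbf{Conclusion.} Apply Lemma~\ref{lem:scale-change} with $a(r)=h_{\Sigma,\infty}(r,\rho)$, $b=h_{\Sigma,1}(\cdot,\rho)$, $c=2$ and exponent $\alpha$. This gives
\[
\underline{\mdim}_{\Sigma,\infty}(X,\rho)\leq\frac1\alpha\underline{\mdim}_{\Sigma,1}(X,\rho),
\]
and the same inequality for the upper versions. The values lie in $[0,+\infty]$, so letting $\alpha\uparrow1$ gives $\underline{\mdim}_{\Sigma,\infty}\leq\underline{\mdim}_{\Sigma,1}$; in the infinite case both sides are simply $+\infty$. Combined with the easy direction, this closes the chain (3.6) and its upper analogue.
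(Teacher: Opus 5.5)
Your proposal is correct and follows the paper's proof essentially step for step: the comparison $\rho_1\le\rho_p\le\rho_\infty$ for the easy direction, Lemma~\ref{lem:sparse} with the binary-entropy bound to obtain $h_{\Sigma,\infty}(2\varepsilon^\alpha,\rho)\le h_{\Sigma,1}(\varepsilon,\rho)+\text{error}$, tameness to make the covering error vanish, and then Lemma~\ref{lem:scale-change} with $c=2$ followed by $\alpha\uparrow1$. The only cosmetic difference is that you use the sharper tail bound $\sum_{j\le\theta m}\binom mj\le e^{m\cH(\theta)}$, valid for $\theta\le 1/2$. This removes the harmless $\log(d_i+1)/d_i$ term that the paper carries.
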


\begin{proof}
If the empty-model alternative in Remark~\ref{rem:empty-models} occurs, then every quantity in (3.6) is $-\infty$ and there is nothing to prove. We henceforth assume the other alternative; all scale covering rates below are then nonnegative.

Since
\[
 \rho_1\leq\rho_p\leq\rho_\infty,
\]
 we have
\[
 \underline{\mdim}_{\Sigma,1}(X,\rho)
 \leq\underline{\mdim}_{\Sigma,p}(X,\rho)
 \leq\underline{\mdim}_{\Sigma,\infty}(X,\rho).
 \tag{3.7}
\]
It suffices to prove the reverse inequality between the two endpoints.

Fix $0<\alpha<1$. Let $\eta=\varepsilon^\alpha$ and $\theta=\varepsilon^{1-\alpha}$. For $\varepsilon$ sufficiently small, $0<\theta<1/2$. Applying Lemma~\ref{lem:sparse} to
\[
 A=\Map(\rho,F,\delta,\sigma_i)
\]
and using
\[
 \binom mj\leq \exp\bigl(m\cH(j/m)\bigr)
 \leq \exp\bigl(m\cH(\theta)\bigr)
 \qquad(0\leq j\leq\theta m),
\]
we first note the explicit bound
\[
 \sum_{j=0}^{\lfloor\theta d_i\rfloor}
 \binom{d_i}{j}\mathrm{S}_\eta(X,\rho)^j
 \leq (d_i+1)\exp\bigl(d_i\cH(\theta)\bigr)
 \mathrm{S}_\eta(X,\rho)^{\theta d_i}.
\]
Consequently,
\begin{align*}
 \frac1{d_i}\log \mathrm{S}_{2\eta}(A,\rho_\infty)
 &\leq \frac1{d_i}\log \mathrm{S}_\varepsilon(A,\rho_1)
 +\cH(\theta)
 +\theta\log \mathrm{S}_\eta(X,\rho)
 +\frac{\log(d_i+1)}{d_i}.
\end{align*}
Taking $\limsup_i$ and then the infimum over $F$ and $\delta$ yields
\[
 h_{\Sigma,\infty}(2\varepsilon^\alpha,\rho)
 \leq h_{\Sigma,1}(\varepsilon,\rho)
 +\cH(\varepsilon^{1-\alpha})
 +\varepsilon^{1-\alpha}
   \log S_{\varepsilon^\alpha}(X,\rho).
 \tag{3.8}
\]
When $\epsilon\to 0$, tame growth gives
\[
 \varepsilon^{1-\alpha}
 \log \mathrm{S}_{\varepsilon^\alpha}(X,\rho)
 = (\varepsilon^\alpha)^{(1-\alpha)/\alpha}
 \log \mathrm{S}_{\varepsilon^\alpha}(X,\rho)
 \longrightarrow0.
 \tag{3.9}
\]
By (3.9) and the fact that $\cH(\varepsilon^{1-\alpha})\to0$, the total error term on the right-hand side of (3.8) tends to zero. Lemma~\ref{lem:scale-change}, applied with $c=2$, gives
\[
 L_\infty
 \leq \frac1\alpha L_1,
 \qquad
 L_1:=\underline{\mdim}_{\Sigma,1}(X,\rho),
 \quad
 L_\infty:=\underline{\mdim}_{\Sigma,\infty}(X,\rho).
 \tag{3.10}
\]
If $L_1=+\infty$, then (3.7) already implies $L_\infty=+\infty$. If $L_1<+\infty$, letting $\alpha\uparrow1$ in (3.10) gives $L_\infty\leq L_1$. Together with (3.7), this proves equality of the lower endpoint quantities and hence (3.6).

Applying the $\limsup$ part of Lemma~\ref{lem:scale-change} to the same inequality gives
\[
 \overline{\mdim}_{\Sigma,\infty}(X,\rho)
 \leq \frac1\alpha
 \overline{\mdim}_{\Sigma,1}(X,\rho).
\]
Letting $\alpha\uparrow1$, with the infinite case again settled by the elementary reverse inequality, proves the upper version.
\end{proof}

Moreover, the sparse-coordinate framework provides an estimate for the endpoint deviation even when tame growth fails to hold.

\begin{definition}[Covering-growth exponent]\label{def:growth-exponent}
For a compact metric space $(X,\rho)$ define
\[
 \beta(X,\rho)
 =\inf\left\{b\geq0:
 \lim_{\varepsilon\downarrow0}
 \varepsilon^a\log \mathrm{S}_\varepsilon(X,\rho)=0
 \text{ for every }a>b\right\},
 \tag{3.11}
\]
with the convention that the infimum of the empty set is $+\infty$.
\end{definition}

\begin{lemma}[Basic properties of the growth exponent]
\label{lem:beta-properties}
Let $(X,\rho)$ be a compact metric space.
\begin{enumerate}[label=\textup{(\roman*)}]
 \item If $\beta(X,\rho)<+\infty$, then for every $a>\beta(X,\rho)$,
 \[
  \lim_{\varepsilon\downarrow0}
  \varepsilon^a\log \mathrm{S}_\varepsilon(X,\rho)=0.
 \]
 \item One has $\beta(X,\rho)=0$ if and only if $(X,\rho)$ has tame growth of covering numbers.
\end{enumerate}
\end{lemma}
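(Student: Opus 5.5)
The plan is to argue directly from the definition (3.11). The one structural fact needed is that the admissible set
\[
 B:=\Bigl\{b\geq0:\lim_{\varepsilon\downarrow0}\varepsilon^a\log \mathrm{S}_\varepsilon(X,\rho)=0\text{ for every }a>b\Bigr\}
\]
is upward closed. If $b\in B$ and $b'>b$, then every $a>b'$ also satisfies $a>b$, so $b'\in B$. Consequently $B$ is an interval of the form $[\beta,\infty)$ or $(\beta,\infty)$, or it is empty; in the empty case $\beta=+\infty$. A useful side remark is that $\log\mathrm{S}_\varepsilon(X,\rho)\geq0$, since a nonempty compact space has covering number at least $1$. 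Hence all the quantities involved are nonnegative, and monotonicity in $a$ is harmless: for $0<\varepsilon<1$ and $a'\geq a$ we have $\varepsilon^{a'}\log\mathrm{S}_\varepsilon\leq\varepsilon^{a}\log\mathrm{S}_\varepsilon$.

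For (i), suppose $\beta:=\beta(X,\rho)<+\infty$ and fix $a>\beta$. Because $\beta$ is the infimum of $B$, there is some $b\in B$ with $\beta\leq b<a$. By the definition of $B$, $\varepsilon^a\log\mathrm{S}_\varepsilon(X,\rho)\to0$, which proves (i).

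For (ii), consider each direction separately. If $(X,\rho)$ has tame growth, then (3.4) holds for every $a>0$. This says exactly that $0\in B$, so $\beta=0$. Conversely, suppose $\beta=0$ and let $a>0$. Then $a>\beta$, and part (i) gives $\varepsilon^a\log\mathrm{S}_\varepsilon(X,\rho)\to0$. This is (3.4) for every $a>0$, which is tame growth.

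There is no real obstacle here. The only point needing care is that the infimum in (3.11) need not be attained. Part (i) sidesteps this by choosing an admissible $b$ strictly between $\beta$ and $a$, rather than applying the defining property at $\beta$ itself. For part (ii) attainment turns out not to matter: tame growth is the condition at $b=0$, and the case $\beta=0$ is handled through (i).
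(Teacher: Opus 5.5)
Your proof is correct and follows the paper's argument essentially verbatim: upward closedness of the admissible set, choosing an admissible $b<a$ for (i), and for (ii) noting that $0$ is admissible under tame growth and invoking (i) for the converse. One small aside: your caution that the infimum in (3.11) need not be attained is unnecessary, since the argument of (i) shows that $\beta$ itself always lies in the admissible set when $\beta<+\infty$; this does not affect your proof.
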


\begin{proof}
Let $\mathcal B$ be the set appearing on the right-hand side of (3.11). It is upward closed: if $b\in\mathcal B$ and $b'>b$, then $b'\in\mathcal B$. If $\beta(X,\rho)<a$, the definition of the infimum gives some $b\in\mathcal B$ with $b<a$. Since the defining limit for $b\in\mathcal B$ holds for every exponent strictly larger than $b$, it holds for $a$. This proves (i). If the metric is tame, then $0\in\mathcal B$, so $\beta(X,\rho)=0$. Conversely, if $\beta(X,\rho)=0$, part (i) gives the tame-growth limit for every $a>0$.
\end{proof}

\begin{proposition}
\label{prop:quantitative-comparison}
Let $\Gamma$ be a countable discrete sofic group acting by homeomorphisms on a compact metrizable
space $X$ and $\Sigma$ a sofic approximation of $\Gamma$.
Let $\rho$ be a compatible metric on $X$ and
assume $\beta(X,\rho)<+\infty$. Then, for every $1\leq p<\infty$,
\begin{align*}
  \underline{\mdim}_{\Sigma,1}(X,\rho)
 &\leq \underline{\mdim}_{\Sigma,p}(X,\rho)\\
 &\leq \underline{\mdim}_{\Sigma,\infty}(X,\rho) \leq
 \bigl(1+\beta(X,\rho)\bigr)
 \underline{\mdim}_{\Sigma,1}(X,\rho).
 \tag{3.12}   
\end{align*}

The analogous equality also holds for the upper sofic $p$-metric mean dimension with respect to  $\rho$.
\end{proposition}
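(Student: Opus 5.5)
The first two inequalities in (3.12) follow from the pointwise comparison $\rho_1\leq\rho_p\leq\rho_\infty$, exactly as in (3.7). So the work is in the last inequality, and I will reuse the mechanism of Proposition~\ref{prop:tame-equality}, now with the scales chosen using $\beta=\beta(X,\rho)$. If the empty-model alternative of Remark~\ref{rem:empty-models} holds, every term is $-\infty$ and there is nothing to prove. Otherwise all quantities lie in $[0,+\infty]$. If $\underline{\mdim}_{\Sigma,1}(X,\rho)=+\infty$, the claim is trivial, and if it equals $0$ the argument below still works. So I will work in that alternative.

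Fix $\alpha\in(0,1)$ with $(1-\alpha)/\alpha>\beta$, which means $\alpha<1/(1+\beta)$. Set $\eta=\varepsilon^\alpha$ and $\theta=\varepsilon^{1-\alpha}$. Applying Lemma~\ref{lem:sparse} to $A=\Map(\rho,F,\delta,\sigma_i)$ with $m=d_i$, and using the binomial-entropy bound as in the proof of Proposition~\ref{prop:tame-equality}, I get (3.8) verbatim:
\[
 h_{\Sigma,\infty}(2\varepsilon^\alpha,\rho)\leq h_{\Sigma,1}(\varepsilon,\rho)+\cH(\varepsilon^{1-\alpha})+\varepsilon^{1-\alpha}\log \mathrm{S}_{\varepsilon^\alpha}(X,\rho).
\]
The only change is in the error term. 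Writing $\varepsilon^{1-\alpha}=(\varepsilon^\alpha)^{a}$ with $a=(1-\alpha)/\alpha>\beta$, Lemma~\ref{lem:beta-properties}(i) shows that this term tends to $0$ as $\varepsilon\downarrow0$. The term $\cH(\varepsilon^{1-\alpha})$ tends to $0$ as well.

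Lemma~\ref{lem:scale-change} with $c=2$ then gives
\[
\underline{\mdim}_{\Sigma,\infty}(X,\rho)\leq\alpha^{-1}\,\underline{\mdim}_{\Sigma,1}(X,\rho),
\]
and the same with upper quantities via its $\limsup$ part. Letting $\alpha\uparrow 1/(1+\beta)$ gives $\alpha^{-1}\downarrow 1+\beta$, which yields the last inequality of (3.12); when $L_1$ is finite this passage to the limit is legitimate. The upper version is identical.

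The main obstacle is making sure the error term is controlled \emph{strictly inside} the admissible range, since $\alpha=1/(1+\beta)$ itself need not work. This is why I take $\alpha$ strictly below the threshold and then pass to the limit. I also need the conventions $0\cdot(+\infty)$ to be handled in the trivial cases, and Lemma~\ref{lem:scale-change} requires $a,b\geq0$, which the nonempty alternative guarantees.
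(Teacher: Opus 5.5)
Your proposal is correct and is essentially the paper's proof: the paper fixes $a>\beta(X,\rho)$, sets $\alpha=1/(1+a)$ so that the covering error in (3.8) becomes $\eta^a\log \mathrm{S}_\eta(X,\rho)\to0$, applies Lemma~\ref{lem:scale-change}, and lets $a\downarrow\beta(X,\rho)$, which is exactly your choice $\alpha<1/(1+\beta)$ with $\alpha\uparrow 1/(1+\beta)$ written in the other variable. Your worry about the convention $0\cdot(+\infty)$ never materializes, since $1+\beta(X,\rho)$ is finite and at least $1$ by hypothesis.
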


\begin{proof}
If the empty-model alternative of Remark~\ref{rem:empty-models} occurs,
then all the quantities are $-\infty$, and the assertion is understood
as the equality of these quantities. Hence assume the nonempty-model
alternative. Then all scale covering rates and all fixed-metric
quantities are nonnegative.
The first two inequalities follow from
$\rho_1\leq\rho_p\leq\rho_\infty$. Fix any
$a>\beta(X,\rho)$ and put
\[
 \alpha=\frac1{1+a}.
\]
Lemma~\ref{lem:beta-properties} gives
\[
 \lim_{\eta\downarrow0}\eta^a\log \mathrm{S}_\eta(X,\rho)=0.
 \tag{3.13}
\]
Apply (3.8) with this value of $\alpha$. Since
$(1-\alpha)/\alpha=a$, the covering error in (3.8) equals
\[
 \varepsilon^{1-\alpha}
 \log \mathrm{S}_{\varepsilon^\alpha}(X,\rho)
 =\eta^a\log \mathrm{S}_\eta(X,\rho),
 \qquad \eta=\varepsilon^\alpha,
\]
and hence tends to zero by (3.13); the entropy error tends to zero as well. Lemma~\ref{lem:scale-change} therefore gives
\[
 \underline{\mdim}_{\Sigma,\infty}(X,\rho)
 \leq\frac1\alpha\underline{\mdim}_{\Sigma,1}(X,\rho)
 =(1+a)\underline{\mdim}_{\Sigma,1}(X,\rho).
\]
Letting $a\downarrow\beta(X,\rho)$ proves the last inequality in (3.12), with the infinite case automatic. The $\limsup$ part of Lemma~\ref{lem:scale-change} proves the upper version in exactly the same way.
\end{proof}

\begin{remark}
Proposition~\ref{prop:quantitative-comparison} is meaningful beyond the
tame case. For example, if
$\log \mathrm{S}_\varepsilon(X,\rho)=O(\varepsilon^{-b})$, then
$\beta(X,\rho)\leq b$ and the argument gives the endpoint factor
$1+b$. No optimality of this factor is claimed. When $\beta(X,\rho)=0$, the proposition recovers
Proposition~\ref{prop:tame-equality}.
\end{remark}

\begin{example}[Metrics with prescribed covering-growth exponent]
\label{ex:prescribed-beta}
Fix $b>0$ and let $X=\{0,1\}^{\N}$. For distinct $x,y\in X$, let
\[
 n(x,y)=\min\{k\geq1:x_k\neq y_k\},
 \qquad
 \rho_b(x,y)=n(x,y)^{-1/b},
\]
and set $\rho_b(x,x)=0$. Then $\rho_b$ is a compatible metric. If
\[
 (N+1)^{-1/b}<\varepsilon\leq N^{-1/b},
\]
the open $\varepsilon$-balls are exactly the cylinder sets determined by the first $N$ coordinates. Hence
\[
 S_\varepsilon(X,\rho_b)=2^N
 \quad\text{and therefore}\quad
 \log S_\varepsilon(X,\rho_b)\asymp\varepsilon^{-b}.
\]
It follows directly from Definition~\ref{def:growth-exponent} that
$\beta(X,\rho_b)=b$. Hence every positive exponent occurs, and Proposition~\ref{prop:quantitative-comparison} holds for a non-trivial class of non-tame metrics.
\end{example}

\section{Proof of Theorem~\ref{thm:strengthened}}\label{sec:main}
\begin{proof}[Proof of Theorem~\ref{thm:strengthened}]
If the global empty-model alternative of Remark~\ref{rem:empty-models} holds, then every fixed-metric and invariant-level quantity in the statement equals $-\infty$, and both (1.1) and (1.2) are immediate. We therefore assume the nonempty-model alternative. All quantities below then belong to $[0,+\infty]$.

Fix a $1\leq p < \infty$ and write
\[
 D_p=\mdim_{\Sigma,\mathrm M,p}(X,\Gamma),
 \qquad
 D_\infty=\mdim_{\Sigma,\mathrm M,\infty}(X,\Gamma).
\]
For every dynamically generating continuous pseudometric $\rho$, the pointwise inequality $\rho_p\leq\rho_\infty$ implies
\[
 \underline{\mdim}_{\Sigma,p}(X,\rho)
 \leq
 \underline{\mdim}_{\Sigma,\infty}(X,\rho).
\]
Taking infima gives
\[
 D_p\leq D_\infty.
 \tag{4.1}
\]
For finite $p$, Remark \ref{rem:compatible} allows $D_p$ to be computed by taking the infimum over compatible metrics.

Suppose  $D_p<+\infty$. Given $\kappa>0$, choose a compatible metric $\rho$ such that
\[
 \underline{\mdim}_{\Sigma,p}(X,\rho)<D_p+\kappa.
 \tag{4.2}
\]
By Lemma~\ref{lem:tame-minorant}, there exists $\rho'\in\mathcal T(X)$ with $\rho'\leq\rho$. Proposition~\ref{prop:monotone} and Proposition~\ref{prop:tame-equality} give
\begin{align*}
 D_\infty
 &\leq \underline{\mdim}_{\Sigma,\infty}(X,\rho')\\
 &=\underline{\mdim}_{\Sigma,p}(X,\rho')\\
 &\leq\underline{\mdim}_{\Sigma,p}(X,\rho)\\
 &<D_p+\kappa.
\end{align*}
Letting $\kappa\downarrow0$ yields $D_\infty\leq D_p$. If $D_p=+\infty$, (4.1) forces $D_\infty=+\infty$. Hence $D_p=D_\infty$ for every finite $p$. Denote this common value by $D_\Sigma(X,\Gamma)$; this proves (1.1).

It remains to prove (1.2). Let $1\leq q<\infty$. Since $\mathcal T(X)$ is a subclass of the compatible metrics, one has
\[
 D_\Sigma(X,\Gamma)=D_q
 \leq
 \inf_{\rho\in\mathcal T(X)}
 \underline{\mdim}_{\Sigma,q}(X,\rho).
 \tag{4.3}
\]

If $D_q<+\infty$, for each $\kappa>0$, we can find a compatible metric $\rho$ satisfying $\underline{\mdim}_{\Sigma,q}(X,\rho)<D_q+\kappa$. By Lemma \ref{lem:tame-minorant}, there exists $\rho'\in\mathcal T(X)$ with $\rho'\leq\rho$. Monotonicity gives
\[
 \inf_{\tau\in\mathcal T(X)}
 \underline{\mdim}_{\Sigma,q}(X,\tau)
 \leq
 \underline{\mdim}_{\Sigma,q}(X,\rho')
 \leq
 \underline{\mdim}_{\Sigma,q}(X,\rho)
 <D_q+\kappa.
\]
Letting $\kappa\downarrow0$, we have
 \[
 D_\Sigma(X,\Gamma)=D_q
 =
 \inf_{\rho\in\mathcal T(X)}
 \underline{\mdim}_{\Sigma,q}(X,\rho).
\]

If $D_q=+\infty$, (4.3) already forces the tame-metric infimum to be $+\infty$. Therefore
\[
 D_\Sigma(X,\Gamma)
 =\inf_{\rho\in\mathcal T(X)}
 \underline{\mdim}_{\Sigma,q}(X,\rho)
 \qquad(1\leq q<\infty).
 \tag{4.4}
\]
Finally, Proposition~\ref{prop:tame-equality} gives, for every $\rho\in\mathcal T(X)$,
\[
 \underline{\mdim}_{\Sigma,q}(X,\rho)
 =\underline{\mdim}_{\Sigma,\infty}(X,\rho).
\]
Taking the infimum over $\mathcal T(X)$ in (4.4) proves (1.2) also for $q=\infty$.
\end{proof}

\begin{corollary}[Answer to Hayes's question]\label{thm:main}
Under the hypotheses of Theorem~\ref{thm:strengthened}, for every $1\leq p<\infty$,
\[
 \mdim_{\Sigma,\mathrm M,p}(X,\Gamma)
 =\mdim_{\Sigma,\mathrm M,\infty}(X,\Gamma).
\]
\end{corollary}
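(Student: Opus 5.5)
This corollary is simply a restatement of part (1.1) of Theorem~\ref{thm:strengthened}, so the plan is to read it off directly from that theorem. I will not rebuild the argument from scratch. I will record which ingredients carry the two inequalities, so the reader can see that nothing beyond Section~\ref{sec:main} is needed.

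\emph{Easy direction.} The pointwise comparison $\rho_p\leq\rho_\infty$ gives
$\underline{\mdim}_{\Sigma,p}(X,\rho)\leq\underline{\mdim}_{\Sigma,\infty}(X,\rho)$ for every dynamically generating $\rho$. Taking infima yields $\mdim_{\Sigma,\mathrm M,p}(X,\Gamma)\leq\mdim_{\Sigma,\mathrm M,\infty}(X,\Gamma)$, which is (4.1).

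\emph{Reverse direction.} Assume $D_p=\mdim_{\Sigma,\mathrm M,p}(X,\Gamma)<+\infty$; the infinite case is forced by (4.1). Fix $\kappa>0$. By Remark~\ref{rem:compatible}, pick a compatible metric $\rho$ that is $\kappa$-optimal for $D_p$. Lemma~\ref{lem:tame-minorant} supplies a tame metric $\rho'\leq\rho$. We then chain three results:
\begin{itemize}
\item Proposition~\ref{prop:tame-equality} replaces $\infty$ by $p$ at $\rho'$;
\item Proposition~\ref{prop:monotone} passes from $\rho'$ up to $\rho$;
\item the choice of $\rho$ bounds the result by $D_p+\kappa$.
\end{itemize}
This gives $D_\infty\leq D_p+\kappa$. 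Letting $\kappa\downarrow 0$ gives $D_\infty\leq D_p$. The global empty-model alternative of Remark~\ref{rem:empty-models} makes both sides $-\infty$ and is dispatched first.

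\emph{Main obstacle.} The only genuinely delicate input is the fixed-metric equality for tame metrics, Proposition~\ref{prop:tame-equality}. It rests on the sparse-coordinate counting of Lemma~\ref{lem:sparse} and the scale change of Lemma~\ref{lem:scale-change}. Since these are established earlier in the paper, the corollary follows by citing Theorem~\ref{thm:strengthened}, equation (1.1).
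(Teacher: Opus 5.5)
Your proposal is correct and matches the paper. The corollary is exactly identity (1.1) of Theorem~\ref{thm:strengthened}, and your sketch follows the paper's proof of (1.1) step for step, including the handling of the empty-model and $D_p=+\infty$ cases: the easy bound $D_p\leq D_\infty$ from $\rho_p\leq\rho_\infty$, a $\kappa$-optimal compatible $\rho$ via Remark~\ref{rem:compatible}, the tame minorant $\rho'\leq\rho$ of Lemma~\ref{lem:tame-minorant}, Proposition~\ref{prop:tame-equality} at $\rho'$, and monotonicity via Proposition~\ref{prop:monotone}.
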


\begin{remark}[Scope of the result]\label{rem:fixed}
Unlike a pure invariant-level equality, Theorem~\ref{thm:strengthened} 
provides one explicit regular metric class that simultaneously resolves all 
exponents.  It does not, however, imply the fixed-metric equality
\[
 \underline{\mdim}_{\Sigma,p}(X,\rho)
 =\underline{\mdim}_{\Sigma,\infty}(X,\rho)
\]
for every compatible metric $\rho$.  Hayes proved such an equality under a finite 
upper covering-exponent hypothesis (called finite packing dimension in 
\cite[Proposition~2.20]{Hayes}).  Proposition~\ref{prop:tame-equality} 
extends this to tame covering growth, permitting subpolynomial but 
super-logarithmic covering---the natural regularity class for the 
invariant-level theory.
\end{remark}

\begin{remark}[Why the metric minorant is essential]
We apply Lemma \ref{lem:hayes-input} only to establish monotonicity 
for $\rho'\le\rho$.  The constant $2$ in Proposition~\ref{prop:monotone} 
arises because $\varepsilon$-centers in a larger model space may lie 
outside the smaller one.  In the sparse-coordinate argument, the exceptional 
set and net points are fixed via the coding; thus two maps with identical 
codes differ by at most $2\eta$ coordinate-wise.  Lastly, the scaling 
$\varepsilon\mapsto2\varepsilon^\alpha$ introduces a $1/\alpha$ factor, 
eliminated only as $\alpha\to1^{-}$.  These observations clarify why any 
direct fixed-metric argument breaks down without tame growth.
\end{remark}

\section{Hilbert-cube metrics variational formula}
\label{sec:hilbert}

Let $\mathcal H(X)$ be the collection of metrics on $X$ of the form
\[
 \rho_{\mathbf f}(x,y)
 =\sum_{n=1}^{\infty}2^{-n}|f_n(x)-f_n(y)|,
 \tag{5.1}
\]
where $f_n\in C(X,[0,1])$ and the family $\mathbf{f}=(f_n)_{n\geq1}$ separates
points of $X$. The map
$x\mapsto(f_n(x))_{n\geq1}$ is then a topological embedding into the
Hilbert cube, so every member of $\mathcal H(X)$ is compatible.

\begin{lemma}
\label{lem:hilbert-cofinal}
Every metric in $\mathcal H(X)$ has tame growth of covering numbers.
Moreover, for every compatible metric $\rho$ on $X$ there exists
$\tau\in\mathcal H(X)$ such that $\tau\leq\rho$.
\end{lemma}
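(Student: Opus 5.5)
The lemma has two parts. For the first, I will repeat the covering argument from the proof of Lemma~\ref{lem:tame-minorant}, now for an arbitrary separating family $\mathbf f=(f_n)$. For the second, I will observe that the metric $\rho'$ built in that proof already lies in $\mathcal H(X)$.

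\emph{Tame growth.} Fix $\tau=\rho_{\mathbf f}\in\mathcal H(X)$ and $0<\varepsilon<1/4$. Put $\Phi(x)=(f_n(x))_{n\geq1}$, so that $\tau(x,y)=D(\Phi(x),\Phi(y))$ with $D$ the weighted $\ell^1$ metric on $[0,1]^{\N}$.
\begin{enumerate}[label=\textup{(\arabic*)}]
 \item Choose $N$ with $2^{-N}<\varepsilon/4$, so that $N=O(\log(1/\varepsilon))$.
 \item Partition $[0,1]$ into at most $1+4/\varepsilon$ intervals of length at most $\varepsilon/4$. Form the product partition on the first $N$ coordinates.
 \item Two points of $\Phi(X)$ lying in the same cell satisfy $D<\varepsilon/4+2^{-N}<\varepsilon/2$. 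The head contributes at most $\sum_{n\leq N}2^{-n}\cdot\varepsilon/4$, and the tail at most $\sum_{n>N}2^{-n}=2^{-N}$, because $f_n$ takes values in $[0,1]$.
\end{enumerate}
Choosing one point of $X$ from each nonempty cell therefore gives $\mathrm S_\varepsilon(X,\tau)\leq(1+4/\varepsilon)^N$. Hence $\log\mathrm S_\varepsilon(X,\tau)=O((\log(1/\varepsilon))^2)$, and $\varepsilon^a$ times this tends to $0$ for every $a>0$, which is (3.4).

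This step needs no compactness beyond finiteness of the cover. The only properties used are $f_n\in C(X,[0,1])$ and the weights $2^{-n}$, so it is genuinely uniform over $\mathcal H(X)$.

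\emph{Minorant.} Given a compatible $\rho$, take $\widetilde\rho=\rho/M$ with $M=\max\{1,\diam(X,\rho)\}$, a dense sequence $(x_n)$, and $f_n(x)=\widetilde\rho(x,x_n)$.
\begin{enumerate}[label=\textup{(\arabic*)}]
 \item Each $f_n$ is continuous, since $\rho$ is compatible, and takes values in $[0,1]$, since $\diam(X,\widetilde\rho)\leq1$.
 \item The family $(f_n)$ separates points, by the $a/3$ argument in Lemma~\ref{lem:tame-minorant}.
 \item So $\tau:=\rho_{\mathbf f}\in\mathcal H(X)$, and it coincides with the $\rho'$ of Lemma~\ref{lem:tame-minorant}.
 \item By the reverse triangle inequality, $\tau\leq\widetilde\rho\leq\rho$.
\end{enumerate}

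Neither part involves a real obstacle. The only point to check is that compatibility of every member of $\mathcal H(X)$ follows, as stated before the lemma, from the fact that a continuous injection from a compact space into the Hausdorff Hilbert cube is an embedding.
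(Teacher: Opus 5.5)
Your proposal is correct and follows the paper's proof: the tame-growth bound is the same cell-counting argument from Lemma~\ref{lem:tame-minorant}, applied to $\Phi(X)$ for an arbitrary separating family $\mathbf f$ and giving $\log \mathrm{S}_\varepsilon = O\bigl((\log(1/\varepsilon))^2\bigr)$. The minorant is the same metric $\tau$ built from $f_n=\rho(\cdot,x_n)/M$, which separates points by the $a/3$ argument and satisfies $\tau\leq\rho$ by the reverse triangle inequality.
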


\begin{proof}
For the first assertion, the proof of Lemma~\ref{lem:tame-minorant}
applies directly to every subset of the Hilbert cube equipped with the
metric
\[
 D(u,v)=\sum_{n=1}^{\infty}2^{-n}|u_n-v_n|.
\]
It gives
\[
 \log \mathrm{S}_\varepsilon(X,\rho_{\mathbf f})
 =O\bigl((\log(1/\varepsilon))^2\bigr).
\]

For the second assertion, let $(x_n)_{n\geq1}$ be dense in $X$ and set
\[
 M=\max\{1,\diam(X,\rho)\},
 \qquad f_n(x)=\frac{\rho(x,x_n)}{M}.
\]
Then $f_n\in C(X,[0,1])$. The family $(f_n)$ separates points by the
same argument as in Lemma~\ref{lem:tame-minorant}. Hence the metric
\[
 \tau(x,y)=\sum_{n=1}^{\infty}2^{-n}|f_n(x)-f_n(y)|
\]
belongs to $\mathcal H(X)$. By the reverse triangle inequality,
\[
 \tau(x,y)\leq\frac{\rho(x,y)}{M}\leq\rho(x,y),
\]
as required.
\end{proof}

\begin{theorem}[Hilbert-cube variational formula]
\label{thm:hilbert-formula}
Under the hypotheses of Theorem~\ref{thm:strengthened}, for every
$1\leq q\leq\infty$,
\[
 D_\Sigma(X,\Gamma)
 =\inf_{\rho\in\mathcal H(X)}
 \underline{\mdim}_{\Sigma,q}(X,\rho).
 \tag{5.2}
\]
Consequently, a single explicit class of action-independent compatible metrics suffices to compute all exponents simultaneously.
\end{theorem}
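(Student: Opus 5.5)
The plan is a sandwich argument that mirrors the proof of (1.2) in Theorem~\ref{thm:strengthened}, with $\mathcal T(X)$ replaced by $\mathcal H(X)$. First I dispose of the global empty-model alternative of Remark~\ref{rem:empty-models}. In that case every fixed-metric quantity $\underline{\mdim}_{\Sigma,q}(X,\rho)$ with $\rho\in\mathcal H(X)$ equals $-\infty$, since each such $\rho$ is a compatible metric and hence dynamically generating. So both sides of (5.2) are $-\infty$. From now on I assume the nonempty-model alternative, so all quantities lie in $[0,+\infty]$.

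For the lower bound, Lemma~\ref{lem:hilbert-cofinal} gives $\mathcal H(X)\subseteq\mathcal T(X)$. Then (1.2) yields
\[
 D_\Sigma(X,\Gamma)=\inf_{\rho\in\mathcal T(X)}\underline{\mdim}_{\Sigma,q}(X,\rho)\leq\inf_{\rho\in\mathcal H(X)}\underline{\mdim}_{\Sigma,q}(X,\rho)
\]
for every $1\leq q\leq\infty$.

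For the reverse inequality, I fix $q$. If $D_\Sigma(X,\Gamma)=+\infty$ there is nothing to prove. Otherwise, given $\kappa>0$, I use (1.2) to choose $\rho\in\mathcal T(X)$ with $\underline{\mdim}_{\Sigma,q}(X,\rho)<D_\Sigma(X,\Gamma)+\kappa$. Since $\rho$ is compatible, the second assertion of Lemma~\ref{lem:hilbert-cofinal} supplies $\tau\in\mathcal H(X)$ with $\tau\leq\rho$. Both $\tau$ and $\rho$ are dynamically generating, so Proposition~\ref{prop:monotone} applies and gives
\[
 \underline{\mdim}_{\Sigma,q}(X,\tau)\leq\underline{\mdim}_{\Sigma,q}(X,\rho)<D_\Sigma(X,\Gamma)+\kappa .
\]
Letting $\kappa\downarrow0$ gives the reverse inequality, and this works uniformly in $q\in[1,\infty]$. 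An alternative route is to start from a compatible metric that nearly realizes $D_q$ via Remark~\ref{rem:compatible} and (1.1); that route also works.

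I do not expect a genuine obstacle here. The only points needing care are, first, that monotonicity is applied in the correct direction: the metric is decreased, from $\rho$ to $\tau\leq\rho$, which is exactly what Lemma~\ref{lem:hilbert-cofinal} provides. Second, the case $q=\infty$ must be covered; this is automatic because (1.2) already includes $q=\infty$. Alternatively, Proposition~\ref{prop:tame-equality} applies to each $\tau\in\mathcal H(X)$ since $\tau$ is tame, so the $q=\infty$ infimum coincides with the finite-$q$ ones. The final remark about action-independence is immediate, because the construction of $\tau$ uses only $\rho$ and a dense sequence, not the action.
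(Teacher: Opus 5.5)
Your proof is correct and follows essentially the same argument as the paper: the lower bound comes from $\mathcal H(X)\subseteq\mathcal T(X)$ together with (1.2), and the upper bound comes from passing to a minorant $\tau\leq\rho$ in $\mathcal H(X)$ and applying Proposition~\ref{prop:monotone}. The only cosmetic difference is where the near-minimizer $\rho$ comes from: the paper takes it among all compatible metrics via Remark~\ref{rem:compatible} for finite $q$ and then handles $q=\infty$ through Proposition~\ref{prop:tame-equality}, whereas you take it from $\mathcal T(X)$ via (1.2) and so treat all $q\in[1,\infty]$ at once; both routes, which you yourself mention, are valid.
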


\begin{proof}
If the empty-model alternative of Remark~\ref{rem:empty-models} holds, every term in (5.2) is $-\infty$, so the formula is immediate. Assume the nonempty-model alternative. Since $\mathcal H(X)\subseteq\mathcal T(X)$, Theorem~\ref{thm:strengthened} gives
\[
 D_\Sigma(X,\Gamma)
 \leq\inf_{\rho\in\mathcal H(X)}
 \underline{\mdim}_{\Sigma,q}(X,\rho).
 \tag{5.3}
\]
For the reverse inequality, first let $1\leq q<\infty$ and suppose $D_\Sigma(X,\Gamma)<+\infty$. Given a $\kappa>0$, by Remark \ref{rem:compatible}, we can find a compatible metric $\rho$ such that
\[
 \underline{\mdim}_{\Sigma,q}(X,\rho)
 <D_\Sigma(X,\Gamma)+\kappa.
\]
Choose $\tau\in\mathcal H(X)$ with $\tau\leq\rho$ by Lemma~\ref{lem:hilbert-cofinal}. Proposition~\ref{prop:monotone} gives
\[
 \inf_{\omega\in\mathcal H(X)}
 \underline{\mdim}_{\Sigma,q}(X,\omega)
 \leq\underline{\mdim}_{\Sigma,q}(X,\tau)
 \leq\underline{\mdim}_{\Sigma,q}(X,\rho)
 <D_\Sigma(X,\Gamma)+\kappa.
\]
Letting $\kappa\downarrow0$ proves equality. If the common invariant is $+\infty$, (5.3) already forces the right-hand side to be $+\infty$. This proves (5.2) for every finite $q$. Finally, every metric in $\mathcal H(X)$ is tame, so Proposition~\ref{prop:tame-equality} identifies its finite-$q$ and $\infty$ lower sofic metric mean dimensions with respect to  metric. Taking infima proves (5.2) for $q=\infty$.
\end{proof}

\begin{corollary}
\label{cor:simultaneous-near-minimizers}
Suppose $D_\Sigma(X,\Gamma)\in[0,+\infty)$. For every $\kappa>0$ there
exists a single metric $\rho_\kappa\in\mathcal H(X)$ such that,
simultaneously for every $1\leq q\leq\infty$,
\[
 D_\Sigma(X,\Gamma)
 \leq\underline{\mdim}_{\Sigma,q}(X,\rho_\kappa)
 <D_\Sigma(X,\Gamma)+\kappa.
\]
If $D_\Sigma(X,\Gamma)=-\infty$, there exists
$\rho_*\in\mathcal H(X)$ for which all these lower sofic $p$-metric mean dimensions with respect to $\rho_{*}$
are $-\infty$.
\end{corollary}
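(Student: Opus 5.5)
The corollary asks for one metric that works for all $q$ at once. The approach is to pick a near-minimizer for a single exponent and then transfer the conclusion to every other exponent through Proposition~\ref{prop:tame-equality}, since all metrics in $\mathcal H(X)$ are tame (Lemma~\ref{lem:hilbert-cofinal}).

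\emph{The finite case.} Assume $D_\Sigma(X,\Gamma)\in[0,+\infty)$ and fix $\kappa>0$.
\begin{enumerate}[label=\textup{(\arabic*)}]
\item Apply Theorem~\ref{thm:hilbert-formula} with $q=1$. The infimum over $\mathcal H(X)$ equals the finite value $D_\Sigma(X,\Gamma)$, so there is $\rho_\kappa\in\mathcal H(X)$ with
\[
 \underline{\mdim}_{\Sigma,1}(X,\rho_\kappa)<D_\Sigma(X,\Gamma)+\kappa .
\]
\item By Lemma~\ref{lem:hilbert-cofinal}, $\rho_\kappa\in\mathcal T(X)$. Proposition~\ref{prop:tame-equality} then gives, for every $1\leq q\leq\infty$,
\[
 \underline{\mdim}_{\Sigma,q}(X,\rho_\kappa)=\underline{\mdim}_{\Sigma,1}(X,\rho_\kappa).
\]
So the strict upper bound holds for all $q$ at once.
\item For the lower bound, each $\rho_\kappa$ is a compatible metric. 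By Remark~\ref{rem:compatible} and (1.1),
\[
 D_\Sigma(X,\Gamma)=\mdim_{\Sigma,\mathrm M,q}(X,\Gamma)\leq \underline{\mdim}_{\Sigma,q}(X,\rho_\kappa)
\]
for finite $q$, and likewise for $q=\infty$. Equivalently, one can read this directly off (5.2), since $\rho_\kappa$ is one of the metrics in the infimum.
\end{enumerate}

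\emph{The case $D_\Sigma(X,\Gamma)=-\infty$.} By the dichotomy in Remark~\ref{rem:empty-models}, all quantities lie in $[0,+\infty]$ in the nonempty-model alternative. Hence the value $-\infty$ forces the global empty-model alternative. In that alternative, Remark~\ref{rem:empty-models} shows that every dynamically generating pseudometric has all lower sofic $p$-metric mean dimensions equal to $-\infty$. So any element of $\mathcal H(X)$ can serve as $\rho_*$, for instance the metric $\tau$ built in Lemma~\ref{lem:hilbert-cofinal} from an arbitrary compatible metric. It remains only to note that $\mathcal H(X)$ is nonempty, which that lemma supplies, since $X$ is compact metrizable.

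The main point needing care is step (2): choosing a near-minimizer for one exponent only yields simultaneity because Proposition~\ref{prop:tame-equality} makes $\underline{\mdim}_{\Sigma,q}(X,\rho_\kappa)$ independent of $q$. This rests on $\rho_\kappa$ being tame. Without tameness one would need a separate near-minimizer for each $q$, and there would be no single metric.
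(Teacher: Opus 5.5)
Your proposal is correct and matches the argument the paper intends; the corollary is stated without a written proof, as an immediate consequence of Theorem~\ref{thm:hilbert-formula}, the tameness of members of $\mathcal H(X)$ from Lemma~\ref{lem:hilbert-cofinal}, and the $q$-independence from Proposition~\ref{prop:tame-equality}, with the $-\infty$ case handled by the dichotomy of Remark~\ref{rem:empty-models}. One small point: the lower bound in your step (3) follows directly from definition (2.4), since $\rho_\kappa$ is a compatible metric and hence dynamically generating, so you do not need Remark~\ref{rem:compatible} there.
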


\begin{example}[A concrete Hilbert-cube metric on a full shift]
\label{ex:full-shift}
Let $X=[0,1]^\Gamma$ with the left shift action, and enumerate
$\Gamma=\{g_1,g_2,\ldots\}$. Define
\[
 \rho_{\mathrm{sh}}(x,y)
 =\sum_{n=1}^{\infty}2^{-n}|x(g_n)-y(g_n)|.
\]
The coordinate functions $x\mapsto x(g_n)$ separate points, so
$\rho_{\mathrm{sh}}$ is compatible with the product topology and belongs
to $\mathcal H(X)$. It therefore has tame growth of covering numbers, and
Proposition~\ref{prop:tame-equality} gives, for every $1\leq p<\infty$,
\[
 \underline{\mdim}_{\Sigma,1}(X,\rho_{\mathrm{sh}})
 =\underline{\mdim}_{\Sigma,p}(X,\rho_{\mathrm{sh}})
 =\underline{\mdim}_{\Sigma,\infty}(X,\rho_{\mathrm{sh}}),
\]
with the analogous equality for the upper fixed-metric quantities. Consequently, even for the canonical full shift, a single explicit metric in the product topology works uniformly across all exponents, so the fixed-metric comparison requires no metric tailored to $p$.
\end{example}

\enlargethispage{4\baselineskip}

\end{document}